\setlist{nosep} 
\theoremstyle{plain}
\newtheorem{theorem}{Theorem}[section]
\newtheorem{lemma}[theorem]{Lemma}
\newtheorem{proposition}[theorem]{Proposition}
\newtheorem{corollary}[theorem]{Corollary}
\theoremstyle{remark}
\newtheorem{remark}[theorem]{Remark}
\theoremstyle{definition}
\newtheorem{definition}[theorem]{Definition}
\setlist{nosep} 
\DeclareMathOperator{\diam}{diam}
\newcommand{\bE}{{\mathbb{E}}}
\newcommand{\bN}{{\mathbb{N}}}
\newcommand{\bP}{{\mathbb{P}}}
\newcommand{\bR}{{\mathbb{R}}}
\newcommand{\bZ}{{\mathbb{Z}}}
\newcommand{\cA}{{\mathcal{A}}}
\newcommand{\hrho}{{\hat{\rho}}}
\newcommand{\eps}{{\varepsilon}}
\title{Equidistribution for nonuniformly expanding dynamical systems,
and application to the almost sure invariance principle}
\author{Alexey Korepanov
\\ {\small Mathematics Institute, University of Warwick,
Coventry, CV4 7AL, UK}}
\date{January 13, 2017 (updated October 21, 2017)}
\begin{document}

\maketitle

\begin{abstract}
  Let $T \colon M \to M$ be a nonuniformly expanding dynamical system,
  such as logistic or intermittent map. 
  Let $v \colon M \to \mathbb{R}^d$
  be an observable and $v_n = \sum_{k=0}^{n-1} v \circ T^k$ denote the Birkhoff sums.
  Given a probability measure $\mu$ on $M$, we consider $v_n$
  as a discrete time random process on the probability space $(M, \mu)$.
  
  In smooth ergodic theory there are various natural choices of $\mu$,
  such as the Lebesgue measure, or the absolutely continuous $T$-invariant
  measure. They give rise to different random processes.
  
  We investigate relation between such processes.
  We show that in a large class of measures, it is possible to couple
  (redefine on a new probability space)
  every two processes so that they are almost surely close to each other,
  with explicit estimates of ``closeness''.
  
  The purpose of this work is to close a gap in the proof of the almost sure
  invariance principle for nonuniformly hyperbolic transformations
  by Melbourne and Nicol.
\end{abstract}

\section{Introduction}
\label{sec:intro}

Suppose that \(T \colon M \to M\) is a dynamical system,
and \(v \colon M  \to \bR^d\) is an observable. 
Let \(v_n = \sum_{k=0}^{n-1} v \circ T^k\) denote the Birkhoff sums. 
Given a probability measure \(\mu\) on \(M\), let
\((v_n, \mu)\) denote the discrete time random process given by
\(v_n\) on the probability space \((M,\mu)\).

In the study of statistical properties of \(v_n\), such as the
the central limit theorem, various choices for \(\mu\) come up naturally,
giving rise to different random processes.

For example, if \(M=[0,1]\) and \(T\) is a nonuniformly
expanding map as in Young~\cite{Y99} such as intermittent or logistic
with a Collet-Eckmann parameter, 
then \(\mu\) may be
(a) the Lebesgue measure,
(b) the absolutely continuous invariant probability measure (a.c.i.p.),
(c) the a.c.i.p.\ for the associated \emph{induced map}
(see Section~\ref{sec:UEM}).

The interest in the Lebesgue measure comes from physics:
it is a \emph{natural choice} of initial condition.
The a.c.i.p.\ has an important advantage over the Lebesgue measure:
if \(\mu\) is the a.c.i.p., then the increments of the process \((v_n, \mu)\) are stationary.
It is standard to prove and state limit theorems in terms of the a.c.i.p.

The measure in (c) appears in a widely used technical argument,
when \(T\) is reduced by a time change \emph{(inducing)} to a uniformly expanding map,
which may be easier to work with. Then statistical properties
of the induced map are used to prove results on the original map.

We explore the relation between processes defined with respect to
different measures. Our motivation is the study of almost sure approximations
by Brownian motion.

\begin{definition}
  \label{defn:ASIP}
  We say that \(v_n\) satisfies the 
  \emph{Almost Sure Invariance Principle} (ASIP), if without changing the distribution,
  \(\{v_n, n \geq 0\}\) can be redefined on a new probability space
  with a Brownian motion \(W_t\), such that with some \(\beta < 1/2\),
  \[
    v_n = W_n + o(n^{\beta})
    \qquad \text{almost surely}
    .
  \]
\end{definition}

The ASIP is a strong statistical property,
it implies the central limit theorem (CLT) and
the law of iterated logarithm (LIL), which in one dimension take form
\[
  \bP\Bigl( \frac{v_n}{\sqrt{n}} \in [a,b] \Bigr)
  \xrightarrow{n \to \infty} \frac{1}{\sqrt{2 \pi \sigma^2} }\int_a^b e^{-\frac{x^2}{2 \sigma^2}} \, dx
  \quad \text{for all } a \leq b
\]
and
\[
  \limsup_{n \to \infty} \frac{v_n}{\sqrt{n \log \log n}} = \sqrt{2} \sigma
  \quad \text{almost surely}.
\]
The ASIP also implies functional versions of the CLT and the LIL as well
as other laws, see Philipp and Stout \cite[Chapter~1]{PS75}.

Melbourne and Nicol \cite{MN05,MN09} proved

\begin{theorem}
  Suppose that \(T\) is nonuniformly expanding with return times in 
  \(L^p\), \(p > 2\) (see Section~\ref{sec:UEM} for definitions)
  with an absolutely continuous invariant probability measure \(\rho\). 
  If \(v \colon M \to \bR^d\) is a H\"older continuous continuous
  observable with \(\int_M v \, d\rho = 0\), then the process
  \(v_n = \sum_{k=0}^{n-1} v \circ T^k\), defined on a probability space
  \((M, \rho)\), satisfies the ASIP.
\end{theorem}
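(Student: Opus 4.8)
The plan is to reduce the statement to an ASIP for an associated uniformly expanding (Gibbs--Markov) \emph{induced} map, to prove that reduced statement by a martingale approximation, and then to transfer the conclusion back to \(T\) by controlling the random time change created by the inducing. First I would introduce the induced map \(F = T^\tau \colon Y \to Y\) on the base \(Y \subset M\), where \(\tau\) is the return time, together with its a.c.i.p.\ \(\hrho\) and the induced observable \(V = \sum_{k=0}^{\tau-1} v \circ T^k\). Since \(v\) is H\"older and \(\tau \in L^p\) with \(p > 2\), one checks that \(V \in L^p(\hrho)\) and that \(\int_Y V \, d\hrho = 0\) once we normalise by \(\int_M v \, d\rho = 0\).

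The second step is the ASIP for the induced data \((V, \hrho)\). Exploiting the Gibbs--Markov structure of \(F\) (uniform expansion and bounded distortion), I would construct a martingale--coboundary decomposition \(V = m + g \circ F - g\), with \(m\) a reverse martingale difference sequence for the natural filtration and \(g \in L^p(\hrho)\); the spectral gap of the transfer operator of \(F\) on a suitable H\"older space furnishes the summability needed to define \(g\) and to bound the tail of \(m\). The Birkhoff sums \(V_N = \sum_{j=0}^{N-1} V \circ F^j\) then differ from a martingale only by the telescoping term \(g \circ F^N - g = o(N^{1/p})\) almost surely, and the martingale strong approximation of Philipp and Stout (Skorokhod embedding with \(L^p\) increments, \(p > 2\)) yields a Brownian motion \(\hat W\) with \(V_N = \hat W_N + o(N^{\beta_0})\) for some \(\beta_0 < 1/2\) depending on \(p\).

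Finally I would un-induce. Writing \(\tau_N = \sum_{j=0}^{N-1} \tau \circ F^j\) for the \(N\)-th return time and \(N(n)\) for the number of returns before time \(n\), the original sums satisfy \(v_{\tau_N} = V_N\), so \(v_n = V_{N(n)} + R_n\), where the boundary term \(R_n\) runs over at most one return and is controlled by the \(L^p\) tail of \(V\). The ergodic theorem gives \(\tau_N \sim \bar\tau N\) with \(\bar\tau = \int_Y \tau \, d\hrho\), hence \(N(n) \sim n/\bar\tau\); a sharper renewal estimate bounding the fluctuation of \(\tau_N - \bar\tau N\) then lets me convert the index from \(N\) to \(n\) and rescale \(\hat W\) into a Brownian motion \(W\) with \(v_n = W_n + o(n^\beta)\), the variance being \(\sigma^2 = \hat\sigma^2/\bar\tau\).

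The main obstacle is precisely this transfer step, which is where the gap lies. The ASIP just established lives on the induced process \((V, \hrho)\), whereas the theorem concerns \((v_n, \rho)\) on all of \(M\): the two are driven by different initial distributions, so the almost sure, pathwise conversion of the time index requires \emph{coupling} \((v_n, \rho)\) to the induced process so that the paths stay within \(o(n^\beta)\) of each other with probability one, not merely that they share a limiting law. Supplying this coupling through the equidistribution estimates developed earlier in the paper, and verifying that the return-time fluctuations contribute an error of order \(o(n^\beta)\) rather than the naive \(O(\sqrt{n})\), is the crux of the argument.
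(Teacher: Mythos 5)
Your first two steps (inducing, then the martingale--coboundary plus Philipp--Stout argument giving the ASIP for the induced sums and hence for the process \((v_n,\mu)\), where \(\mu\) is the \(F\)-invariant measure that you denote \(\hrho\)) reproduce what Melbourne and Nicol actually established in \cite{MN05,MN09}, and the paper does not reprove this part --- it is taken as given. The problem is your third step. You correctly identify that the remaining obstacle is to pass from the process started from \(\mu\) to the process \((v_n,\rho)\) started from the \(T\)-invariant measure, and that this requires a pathwise coupling rather than agreement of limiting laws: the ASIP is a statement about the \emph{law} of the process (existence of a redefinition on an enlarged space together with a Brownian motion), so even though \(\mu\) and the normalized restriction of \(\rho\) to \(Y\) are equivalent measures, the ASIP for \((v_n,\mu)\) does not formally imply it for \((v_n,\rho)\); equivalence of measures only transfers statements of the form ``for a.e.\ \(x\), \(P(x)\) holds'', which the ASIP is not. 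But having named this step ``the crux'', you do not supply it: ``supplying this coupling through the equidistribution estimates developed earlier in the paper'' is a deferral, not an argument. Since this is exactly the gap in \cite{MN05,MN09} that a proof of the theorem must close, the proposal does not prove the theorem: everything you argue in detail was already available, and the one step you leave open is the one that was missing.

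For comparison, here is how the paper closes it (Theorem~\ref{thm:vahu}). By Proposition~\ref{prop:jnu}, \(\rho\) disintegrates over the Young tower as \(\rho = \int_E \rho_z \, d\varkappa(z)\), where each \(T_*^{r(z)}\rho_z\) is a regular measure on \(Y\); each such piece then admits the regenerative disintegration of Lemma~\ref{lemma:b87e}, \(\rho_z = \sum_{a\in\cA}\bP(a)\,\rho_{z,a}\) with \(T_*^{r(z)+t(a)}\rho_{z,a} = \mu\) \emph{exactly}. Pushing \(\rho_{z,a}\) forward by \(x\mapsto \bigl(x, T^{r(z)+t(a)}x\bigr)\) (Lemma~\ref{lemma:abb}) yields a coupling \(\hrho\) of \(\rho\) and \(\mu\) supported on pairs lying on the same \(T\)-orbit, with the matching time \(s\) finite \(\hrho\)-almost surely (Theorem~\ref{thm:yeop}). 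By Remark~\ref{rmk:aaggg} this gives \(\sup_n |X_n - Y_n| \leq 2|v|_\infty s < \infty\) almost surely (Theorem~\ref{thm:yeoc}), and Lemma~\ref{lemma:joc} joins this coupling to the Brownian motion furnished by the ASIP for \(Y_n=(v_n,\mu)\), so that \(Y_n = W_n + o(n^\beta)\) forces \(X_n = W_n + o(n^\beta)\). Note that the paper's coupling produces a difference that is \emph{bounded uniformly in \(n\)}, much stronger than the \(o(n^\beta)\)-closeness you ask for, and it renders your final concern about return-time fluctuations moot: once the two trajectories are matched exactly after an almost surely finite random time, no renewal or fluctuation estimate enters the transfer step at all.
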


\begin{remark}
  \label{rmk:NUH}
  Following the approach of \cite{B75,S72},
  the ASIP for nonuniformly expanding systems extends to
  a large class of nonuniformly hyperbolic systems 
  which satisfy the hypotheses of Young~\cite{Y98},
  for example Sinai billiards or H\'enon maps. See \cite[Lemma~3.2]{MN05}.
\end{remark}

Later Gou\"ezel discovered a gap in~\cite{MN05,MN09}:
what Melbourne and Nicol actually proved is the ASIP 
for a different starting measure, the one invariant invariant under the
induced map. A similar issue appears in Denker and Philipp~\cite{DP84},
though they do not claim the ASIP for the invariant measure.
Even though there is a close relation between the two measures,
the argument relating the ASIP-s was missing. The main goal of this
paper is to fill this gap.

\begin{remark}
  Despite the gap, the usual corollaries of the
  ASIP (such as the functional central limit theorem
  and functional law of iterated logarithm) can be obtained from~\cite{MN05,MN09},
  as it is done in~\cite{DP84}.
\end{remark}

\begin{remark}
  Besides~\cite{MN05,MN09}, there are other results which
  cover nonuniformly hyperbolic systems, but only partially:
  \begin{itemize}
    \item Chernov~\cite{C06}: scalar ASIP for dispersing billiards.
    \item Gou\"ezel~\cite{G10}: vector valued ASIP for dynamical systems
      with an exponential multiple decorrelation assumption (includes
      dispersing billiards).
    \item Cuny and Merlev\`ede~\cite{CM15}: scalar ASIP for
      reverse martingale differences (applies to nonuniformly expanding maps,
      see~\cite{KKM16mart}).
  \end{itemize}
  Problems which are only covered by~\cite{MN05} and~\cite{MN09}
  include the vector valued ASIP for maps with slower than
  exponential rate of decay of correlations, such as the intermittent
  family~\cite{LSV99}.
\end{remark}

We work in the setting where \(T\) is a nonuniformly 
expanding map (as in~\cite{Y99}) and \(v_n = \sum_{k=0}^{n-1} v \circ T^k\)
are Birkhoff sums. Given two probability measures \(\mu\) and \(\rho\),
we compare the random processes \(X_n = (v_n, \mu)\) and \(Y_n = (v_n, \rho)\).

Our main result is that if \(v\) is bounded, then
in a large class of probability measures, it is possible to redefine
\(\{X_n, n \geq 0\}\) and \(\{Y_n, n \geq 0\}\) on a new
probability space so that
\(
  Z = \sup_{n \geq 0} |X_n - Y_n|
\)
is finite almost surely.

\begin{remark}
  Technically, the statement above means that there exists
  a probability space \((\Omega, \bP)\), supporting processes \(X'_n\), \(Y'_n\),
  such that:
  \begin{itemize}
    \item \(\{X_n, n \geq 0\}\) is equal in distribution to \(\{X'_n, n \geq 0\}\),
    \item \(\{Y_n, n \geq 0\}\) is equal in distribution to \(\{Y'_n, n \geq 0\}\),
    \item \(Z' = \sup_{n \geq 0} |X'_n - Y'_n|\) is finite almost surely.
  \end{itemize}
\end{remark}

In addition, we estimate the tails of \(Z\) (i.e.\ \(\bP(Z \geq a)\)
for \(a \geq 0\)) in terms of \(|v|_\infty\) and parameters of \(T\)
such as distortion bound and asymptotics of return times.

For a fixed \(n \geq 0\), we estimate the distance
between \(X_n\) and \(Y_n\) in L\'evy-Prokhorov and Wasserstein
metrics. We expect such estimates to be useful for families
of dynamical systems as in~\cite{KKM15}.

\begin{remark}
  Our approach is in many ways similar to the \emph{Coupling Lemma} for
  dispersing billiards \cite[Lemma~7.24]{CM06},
  due to Chernov, Dolgopyat and Young.
  Also, after the first version of this paper was circulated,
  the author was made aware that some of the techniques are analogous to
  those in Zweim\"uller~\cite{Z09}. Notably, our disintegration~\eqref{eq:jnngg}
  corresponds to Zweim\"uller's \emph{regenerative partition of unity}.
\end{remark}

The paper is organized as follows.
In Section~\ref{sec:UEM} we give the definition of nonuniformly expanding maps
and state our results.
In Section~\ref{sec:app} we present some applications, including the
ASIP in Subsection~\ref{sec:asip}.
Section~\ref{sec:proofs} contains the proofs.

\section{Abstract setup and results}
\label{sec:UEM}

\subsection{Nonuniformly expanding maps}

We use notation \(\bN=\{1,2,\ldots\}\) and \(\bN_0 =\bN \cup \{0\}\).

Let \((M,d)\) be a metric space with a Borel probability measure \(m\)
and \(T \colon M \to M\) be a nonsingular transformation.
We assume that there exists \(Y \subset M\) with \(m(Y)>0\) and
\(\diam Y < \infty\), an at most countable partition \(\alpha\) of \(Y\)
(modulo a zero measure set)
and \(\tau \colon Y \to \bN\) with \(\int_Y \tau \, dm < \infty\)
such that for every \(a \in \alpha\),
\begin{itemize}
  \item \(m(a) > 0\),
  \item \(\tau\) assumes a constant value \(\tau(a)\) on \(a\),
  \item \(T^{\tau(a)} a \subset Y\).
\end{itemize}
Let \(F \colon Y \to Y\), \(F y = T^{\tau(y)} y\). We require that
there are constants \(\lambda>1\), \(\hat{K} \geq 0\) and \(\eta \in (0,1]\),
such that for each \(a \in \alpha\) and \(x,y \in a\):
\begin{itemize}
  \item \(F\) restricts to a (measure-theoretic) bijection
    from \(a\) onto \(Y\),
  \item \(d(F x, F y) \geq \lambda d(x,y)\),
  \item the inverse Jacobian \(\zeta_m = \frac{dm}{dm \circ F}\) of \(F\)
    has bounded distortion:
    \[
      \bigl| \log \zeta_m (x) - \log \zeta_m (y) \bigr|
      \leq \hat{K} d(F x, F y)^\eta
      .
    \]
\end{itemize}

We call such maps \(T\) \emph{nonuniformly expanding}.
We refer to \(F\) as \emph{induced map} and to \(\tau\) as
\emph{return time function.}
The class of nonuniformly expanding maps includes 
logistic maps at Collet-Eckmann parameters,
intermittent maps \cite{Y99} and Viana maps.

To simplify the exposition, we assume that \(\diam Y \leq 1\) and \(\eta = 1\).
The general case can be always reduced to this by 
replacing the metric \(d\) with \(d'\) given by \(d'(x,y) = c d(x,y)^\eta\),
where \(c\) is a sufficiently small constant.

It is standard that there exists a unique \(F\)-invariant
absolutely continuous probability
measure \(\mu\) on \(Y\). Let \(\zeta = \frac{d\mu}{d\mu \circ F}\).
By \cite[Propositions 2.3 and 2.5]{KKM16},
\begin{equation}
  \label{eq:antt}
  K^{-1} \leq \frac{d\mu}{dm} \leq K
  \qquad \text{and} \qquad
  \bigl| \log \zeta (x) - \log \zeta (y) \bigr|
    \leq K d(F x, F y)
\end{equation}
for all \(x,y \in a\), \(a \in \alpha\),
where \(K\) is a constant which depends continuously 
(only) on \(\lambda\) and \(\hat{K}\).
Where convenient, we view \(\mu\) as a measure on \(M\)
supported on \(Y\).

For a function \(\phi \colon Y \to \bR\) denote
\[
  |\phi|_\infty = \sup_{x \in Y} |\phi(x)|,
  \qquad
  |\phi|_d = \sup_{x \neq y \in Y} \frac{|\phi(x) - \phi(y)|}{d(x,y)}
  \qquad and \qquad \|\phi\|_d = |\phi|_\infty + |\phi|_d.
\]
For \(\phi \colon Y \to (0,\infty)\), denote
\(|\phi|_{d, \ell} = |\log \phi|_d\).

\subsection{Coupling of processes}

Fix a  constant \(R' > K \lambda / (\lambda - 1)\).

\begin{definition}
  \label{defn:reg}
  We call a probability measure \(\rho\) on \(M\)
  \emph{regular} if it is supported on \(Y\) and \(d \rho = \phi \, d\mu\),
  where \(\phi \colon Y \to [0, \infty)\)
  satisfies \(|\phi|_{d, \ell} \leq R'\).
\end{definition}

\begin{definition}
  \label{defn:freg}
  We say that a probability measure \(\rho\) on \(M\) is \emph{forward regular}, if
  it allows a disintegration
  \begin{equation}
    \label{eq:nu}
    \rho = \int_{E} \rho_z \, d\varkappa (z),
  \end{equation}
where \((E, \varkappa)\) is a probability space
and \(\{\rho_z\}\) is a measurable family of probability 
measures on \(M\), and
there exists a function \(r \colon E \to \bN_0\) such that
\(T_*^{r(z)} \rho_z\) is a regular measure for each \(z\). 
We refer to \(r\) as a \emph{jump function.}
\end{definition}

Define \(s \colon M \times M \to \bN_0 \cup \{\infty\}\),
\begin{equation}
  \label{eq:ffo}
  s(x,y) = 
  \inf \bigl\{ \max\{k, n\} \colon k, n \geq 0, \, T^k x = T^n y \bigr\}.
\end{equation}

Note that if \(s(x,y) < \infty\), then the trajectories
\(T^k x\) and \(T^k y\), \(k \geq 0\) coincide
up to a time shift and possibly different beginnings.

\begin{theorem}
  \label{thm:yeop}
  Suppose that a probability measure \(\rho\) on \(M\) is forward regular.
  Then there exists a probability measure \(\hrho\) on \(M \times M\)
  with marginals \(\rho\) and \(\mu\) on the first and second 
  components respectively such that \(s\) is finite \(\hrho\)-almost surely.
  
  In addition, with \((E, \varkappa)\) and \(r\) as in Definition~\ref{defn:freg},
  \begin{enumerate}[label=(\alph*)]
  \item (Weak polynomial moments)
  If \(\varkappa(r \geq n) \leq C_\beta n^{-\beta}\) and 
  \(\mu(\tau \geq n) \leq C_\beta n^{-\beta}\) for all \(n \geq 1\) with some 
  constants \(\beta > 1\) and \(C_\beta > 0\), then
  \[
    \hrho(s \geq n) \leq C n^{-\beta} \quad \text{for all } n > 0,
  \]
  where the constant \(C\) depends continuously (only) on
  \(\lambda\), \(K\), \(R'\), \(\beta\) and \(C_\beta\).
  \item (Strong polynomial moments)
  If \(\int r^\beta \, d\varkappa \leq C_\beta\) and 
  \(\int \tau^\beta \, d\mu \leq C_\beta\) with some 
  constants \(\beta > 1\) and \(C_\beta > 0\), then
  \[
    \int s^\beta \, d\hrho \leq C,
  \]
  where the constant \(C\) depends continuously (only) on
  \(\lambda\), \(K\), \(R'\), \(\beta\) and \(C_\beta\).
  \item (Exponential and stretched exponential moments)
  If \(\varkappa(r \geq n) \leq C_{\alpha,\gamma} e^{-\alpha n^\gamma}\) and 
  \(\mu(\tau \geq n) \leq C_{\alpha,\gamma} e^{-\alpha n^\gamma}\)
  for all \(n\)
  with some constants \(\alpha > 0\), \(\gamma \in (0,1]\), \(C_{\alpha, \gamma} > 0\),
  then  
  \[
    \hrho(s \geq n) \leq C e^{-A n^\gamma} \quad \text{for all } n > 0,
  \]
  where the constants \(C > 0\) and \(A > 0\) depend continuously (only) on
  \(\lambda\), \(K\), \(R'\), \(\alpha\), \(\gamma\) and \(C_{\alpha,\gamma}\).
  \end{enumerate}
\end{theorem}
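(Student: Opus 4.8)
The plan is to build the coupling $\hrho$ in three stages: first reduce the forward-regular case to that of a single regular measure; then couple a regular measure with the invariant measure $\mu$ by an iterated minorization over the induced map $F$, obtaining exponential control of the coupling time \emph{measured in induced steps}; and finally convert induced time into real time, which is where the moment hypotheses on $\tau$ and $r$ enter.

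For the reduction I would write $\rho = \int_E \rho_z \, d\varkappa(z)$ and couple each $\rho_z$ with $\mu$ separately: if $\hrho_z$ is a coupling of $\rho_z$ and $\mu$ with $s$ finite almost surely, then $\hrho = \int_E \hrho_z \, d\varkappa(z)$ has first marginal $\int_E \rho_z\,d\varkappa = \rho$ and second marginal $\int_E \mu\,d\varkappa = \mu$, as required. Since $T_*^{r(z)}\rho_z$ is regular, I would run the $x$-orbit for $r(z)$ steps, apply the regular coupling to $T_*^{r(z)}\rho_z$ and $\mu$, and note that a meeting of the shifted orbits within time $s_0$ yields $s(x,y) \le r(z) + s_0$. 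Thus it suffices to couple a regular $\rho = \phi\,d\mu$ with $\mu$, and afterwards fold in the extra $r(z)$.

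For this regular coupling the key observation is a \emph{minorization}: a regular density satisfies $|\log\phi|_d \le R'$ with $\diam Y \le 1$, so $\phi$ has ratio $\max/\min \le e^{R'}$ and, being a probability density, $\phi \ge e^{-R'} =: \delta$ everywhere. Hence $\rho$ and $\mu$ share a common component of mass $\delta$, which I place on the diagonal, immediately coupling those pairs with coinciding orbits. The leftover mass is pushed forward by $F$ and the step is repeated. What keeps this going is the contraction of the log-Lipschitz seminorm under the transfer operator $L$ of $F$ with respect to $\mu$ (so $L1=1$): the distortion bound in \eqref{eq:antt}, together with the fact that the inverse branches $F_a^{-1}$ contract by $\lambda^{-1}$, gives $|\log L g|_d \le K + \lambda^{-1}|\log g|_d$ for every nonnegative $g$, so the cone $\{\, |\log g|_d \le R' \,\}$ is preserved because $R' > K\lambda/(\lambda-1)$. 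Matching a fixed fraction $\delta_* < \delta$ at each step and iterating the operation ``subtract $\delta_*$, push forward by $L$'' keeps the residual normalized densities in a fixed cone, so a definite fraction of the remaining mass is matched every induced step. Consequently the uncoupled mass after $N$ induced steps is at most $(1-\delta_*)^N$; that is, the \emph{induced} coupling time $N$ has an exponential tail with rate depending only on $\lambda$, $K$ and $R'$.

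The last and hardest step is the passage from induced to real time. A pair matched at induced step $N$ has $F^N x = F^N y$, so the orbits meet and $s(x,y) \le r + \max\{\tau_N(x), \tau_N(y)\}$, where $\tau_N(\cdot) = \sum_{j=0}^{N-1}\tau\circ F^j$. Because $\mu$ is $F$-invariant and the residual densities stay comparable to $1$ (bounded by $e^{R'}$), the return times $\tau\circ F^j$ met along either orbit are distributed, up to a uniform constant, like $\tau$ under $\mu$; the regeneration built into the minorization—each unmatched residual is again regular, so the return-time statistics refresh—is what makes this precise and decouples the geometric time $N$ from the individual return times. It then remains a renewal-type estimate for a sum of $N$ return times, with $N$ exponentially tailed. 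For (a) I would dominate $\tau_N \le N\max_{j<N}\tau\circ F^j$ and combine the union bound $\hrho(\max_{j<N}\tau\circ F^j \ge n) \lesssim \bE[N]\,\mu(\tau \ge n) \lesssim n^{-\beta}$ with the exponentially small event $\{N \gtrsim \log n\}$; for (b) I would combine the exponential moments of $N$ with the $\beta$-th moment of $\tau$ through a Rosenthal/Minkowski inequality for random sums; for (c) the stretched-exponential tail of $\tau$ and the exponential tail of $N$ combine to give $e^{-An^\gamma}$. I expect this conversion—keeping the dependence between $N$ and the $\tau\circ F^j$ under control so that the tail of $s$ is no heavier than that of $\tau$ (and of $r$)—to be the main obstacle; the geometric coupling in induced time is comparatively routine once the cone invariance is in place.
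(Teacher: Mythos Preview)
Your high-level plan coincides with the paper's: disintegrate over $E$ to reduce to a single regular measure, run an iterated minorization that peels off a fixed fraction per induced step while keeping the residual density in a log-Lipschitz cone, and then convert induced steps to real time using the tails of $\tau$ and $r$. The genuine difference is the \emph{shape} of the coupling. You propose the classical two-sided construction: match on the diagonal, evolve both coordinates under $F$, stop when $F^N x=F^N y$, so that $s\le \max\{\tau_N(x),\tau_N(y)\}$ and one must control return times along \emph{both} orbits (and lift the matching back through $F$ at every stage). The paper instead exploits the asymmetry built into $s$: since $T^k x=T^0 y$ already forces $s\le k$, it is enough to decompose $\rho=\sum_{a\in\cA}\bP(a)\rho_a$ with $T_*^{t(a)}\rho_a=\mu$ and set $\hrho_a=(x\mapsto(x,T^{t(a)}x))_*\rho_a$. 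There is no second orbit and no pullback; $s\le t(a)$ holds identically on each piece, and $t(a)=\sum_k\tau(a_k)$ is a sum over a word of geometric length with letters whose law is dominated by $e^R\mu$. This one-sided device makes the conversion to real time completely clean: conditioning on the word length $n$ gives $\bP_n(t\ge\ell)\le n\,e^R\mu(\tau\ge\ell/n)\lesssim n^{1+\beta}\ell^{-\beta}$, and summing against $(1-\xi)^n$ yields the sharp $\ell^{-\beta}$. Your sketched route for (a), splitting on $\{N\gtrsim\log\ell\}$ and then bounding $\max_{j<N}\tau\circ F^j$, produces an extra factor $(\log\ell)^{1+\beta}$; to get the stated bound you would have to condition on $N$ as the paper does on word length. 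One further technical point: subtracting $\delta_*$ strictly \emph{increases} $|\log\,\cdot\,|_d$, so ``subtract $\delta_*$, then apply $L$'' does not obviously preserve the cone $\{|\log g|_d\le R'\}$; the paper resolves this with a two-cone step (Proposition~\ref{prop:bamh}), passing to a larger radius $R$ after subtraction and using the strict inequality $R'>K\lambda/(\lambda-1)$ to land back inside after the transfer. Your programme can be completed, but the paper's one-sided coupling is the structural shortcut you are missing.
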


Let \(v \colon M \to \bR^d\) be a bounded observable and
\(v_n = \sum_{k=0}^{n-1} v \circ T^k\). Denote
\(|v|_\infty = \sup_{x \in M} |v(x)|\).

\begin{remark}
  \label{rmk:aaggg}
  \(|v_n(x) - v_n(y)| \leq 2 |v|_\infty s(x,y)\)
  for all \(x,y \in M\) and \(n \geq 0\).
\end{remark}

Let \(\rho_j\), \(j=1,2\) be two forward regular probability measures
with disintegrations 
\(
  \rho_j = \int_{E_j} \rho_{j,z} \, d\varkappa_j (z)
\)
and jump functions \(r_j\).
Let \(X_n = (v_n, \rho_1)\) and \(Y_n = (v_n, \rho_2)\) be the
related random processes.

\begin{theorem}
  \label{thm:yeoc}
  The processes \(\{X_n, n \geq 0\}\) and \(\{Y_n, n \geq 0\}\)
  can be redefined on the same probability space \((\Omega, \bP)\)
  such that
  \(
    Z = \sup_{n \geq 0} |X_n - Y_n|
  \)
  is finite with probability one. Also:
  
  \begin{enumerate}[label=(\alph*)]
  \item (Weak polynomial moments)
  If \(\varkappa_1(r_1\geq n) \leq C_\beta n^{-\beta}\),
  \(\varkappa_2(r_2\geq n) \leq C_\beta n^{-\beta}\) and
  \(\mu(\tau \geq n) \leq C_\beta n^{-\beta}\) for all \(n\) with
  some constants \(C_\beta > 0\) and \(\beta > 1\), then
  \[
    \bP(Z \geq x) \leq C x^{-\beta} \quad \text{for all } x > 0,
  \]
  where the constant \(C\) depends continuously (only) on
  \(\lambda\), \(K\), \(R'\), \(\beta\), \(C_\beta\) and \(|v|_\infty\).
  \item (Strong polynomial moments)
  If \(\int r_1^\beta \, d\varkappa_1 \leq C_\beta\),
  \(\int r_2^\beta \, d\varkappa_2 \leq C_\beta\) and
  \(\int \tau^\beta \, d\mu \leq C_\beta\) with
  some constants \(C_\beta > 0\) and \(\beta > 1\), then
  \[
    \int Z^\beta \, d\bP \leq C,
  \]
  where the constant \(C\) depends continuously (only) on
  \(\lambda\), \(K\), \(R'\), \(\beta\), \(C_\beta\) and \(|v|_\infty\).
  \item (Exponential and stretched exponential moments)
  If \(\varkappa_1(r_1 \geq n) \leq C_{\alpha,\gamma} e^{-\alpha n^\gamma}\), 
  \(\varkappa_2(r_2 \geq n) \leq C_{\alpha,\gamma} e^{-\alpha n^\gamma}\) and 
  \(\mu(\tau \geq n) \leq C_{\alpha,\gamma} e^{-\alpha n^\gamma}\)
  for all \(n\)
  with some constants \(\alpha > 0\), \(\gamma \in (0,1]\), \(C_{\alpha, \gamma} > 0\),
  then  
  \[
    \bP(Z \geq x) \leq C e^{-A x^\gamma} \quad \text{for all } x > 0,
  \]
  where the constants \(C > 0\) and \(A > 0\) depend continuously (only) on
  \(\lambda\), \(K\), \(R'\), \(\alpha\), \(\gamma\), \(C_{\alpha,\gamma}\)
  and \(|v|_\infty\).
  \end{enumerate}
\end{theorem}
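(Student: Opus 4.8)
The plan is to apply Theorem~\ref{thm:yeop} twice, coupling each of $\rho_1$ and $\rho_2$ to the common invariant measure $\mu$, and then to glue the two couplings along $\mu$. In the resulting coupling of $\rho_1$ and $\rho_2$, both trajectories are compared to the same intermediate $\mu$-trajectory, and the triangle inequality together with Remark~\ref{rmk:aaggg} controls $Z$.

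First I would apply Theorem~\ref{thm:yeop} to $\rho_1$, obtaining a measure $\hrho_1$ on $M \times M$ with marginals $\rho_1$ and $\mu$ on the first and second components and with $s$ finite $\hrho_1$-almost surely; applying it to $\rho_2$ gives $\hrho_2$ on $M \times M$, which I relabel to have $\mu$ on the first component and $\rho_2$ on the second. Disintegrating $\hrho_1$ over its second marginal $\mu$ and $\hrho_2$ over its first marginal $\mu$ yields measurable conditional families $\{\hrho_1^w\}$ and $\{\hrho_2^w\}$ of probability measures on $M$, indexed by $w \in Y$. I would then define $\Pi$ on $M \times M \times M$ by
\[
  d\Pi(x,w,y) = d\hrho_1^w(x)\, d\hrho_2^w(y)\, d\mu(w),
\]
so that its $(x,w)$-marginal is $\hrho_1$ and its $(w,y)$-marginal is $\hrho_2$. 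Taking $(\Omega,\bP) = (M^3, \Pi)$ and setting $X_n(x,w,y) = v_n(x)$, $Y_n(x,w,y) = v_n(y)$, the first and third coordinates have marginals $\rho_1$ and $\rho_2$; since $X_n$ and $Y_n$ are deterministic functions of $x$ and $y$, the processes $\{X_n\}$ and $\{Y_n\}$ have exactly the required distributions.

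The key estimate is then immediate from Remark~\ref{rmk:aaggg}: writing $S_1 = s(x,w)$ and $S_2 = s(w,y)$ and comparing both trajectories to the one through $w$,
\[
  |X_n - Y_n| \leq |v_n(x) - v_n(w)| + |v_n(w) - v_n(y)| \leq 2|v|_\infty(S_1 + S_2)
\]
for every $n \geq 0$, whence $Z \leq 2|v|_\infty(S_1 + S_2)$. Since $S_1$ and $S_2$ are finite $\bP$-almost surely by Theorem~\ref{thm:yeop}, $Z$ is finite almost surely. For the quantitative parts, I would transfer the tail and moment bounds for $S_1$ and $S_2$ from parts~(a)--(c) of Theorem~\ref{thm:yeop} to $S_1 + S_2$ via $\bP(S_1 + S_2 \geq t) \leq \bP(S_1 \geq t/2) + \bP(S_2 \geq t/2)$ in the polynomial and (stretched) exponential cases, and via $(S_1 + S_2)^\beta \leq 2^{\beta-1}(S_1^\beta + S_2^\beta)$ in the strong-moment case; rescaling by $2|v|_\infty$ then gives the stated bounds on $Z$, with the exponential rate $A$ losing only a factor $2^{-\gamma}$.

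I expect the only genuine subtlety to be the gluing step, namely verifying that the disintegrations $w \mapsto \hrho_j^w$ are measurable and that $\Pi$ has $\hrho_1$ and $\hrho_2$ as its bivariate marginals, so that the process laws come out correctly. This is standard (regular conditional probabilities exist on the standard Borel space $M$), and all the dynamical content has already been absorbed into Theorem~\ref{thm:yeop} and Remark~\ref{rmk:aaggg}; what remains is the elementary combination of tail estimates described above.
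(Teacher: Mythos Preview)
Your proposal is correct and follows essentially the same route as the paper: apply Theorem~\ref{thm:yeop} twice to couple each of $\rho_1,\rho_2$ with $\mu$, glue the two couplings through the common $\mu$-marginal, and use Remark~\ref{rmk:aaggg} plus the triangle inequality to bound $Z$ by $2|v|_\infty(S_1+S_2)$. The only cosmetic difference is that the paper packages the gluing step as an abstract ``joining of couplings'' lemma (Lemma~\ref{lemma:joc}, applied at the level of the $(\bR^d)^{\bN_0}$-valued processes), whereas you carry it out explicitly via disintegration on $M^3$; this requires $M$ to be standard Borel, which is harmless in all applications.
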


Proofs of Theorems~\ref{thm:yeop} and~\ref{thm:yeoc} are
in Section~\ref{sec:proofs}.

\section{Applications}
\label{sec:app}

\subsection{L\'evy-Prokhorov and Wasserstein distances}

Let \(X\) and \(Y\) be \(\bR^d\)-valued random variables, and \(\bP_X\), \(\bP_Y\)
be the associated probability measures on \(\bR^d\). Recall the following
definitions:

\begin{definition}
  The L\'evy-Prokhorov distance between \(X\) and \(Y\) is
  \begin{align*}
    d_{LP} (X, Y)
    = \inf \{ & \eps > 0 \colon \bP_X(A) \leq \bP_Y (A^\eps) + \eps
    \;\text{ and }\; \bP_Y(A) \leq \bP_X (A^\eps) + \eps
    \\ & \text{ for all Borel } A \subset \bR^d\},
  \end{align*}
  where \(A^\eps = \{x \colon \inf_{y \in A } |x-y| \leq \eps \} \).
\end{definition}

\begin{definition}
  For \(p \geq 1\), the \(p^{\text{th}}\) Wasserstein distance between \(X\) and \(Y\) is
  \[
    d_{W,p} (X, Y)
    = \inf \Bigl[ \bE \bigl( |X-Y|^p \bigr) \Bigr]^{1/p},
  \]
  where the infimum is taken over all couplings of \(X\) and \(Y\).
\end{definition}

Suppose that \(X_n\) and \(Y_n\) are as in Theorem~\ref{thm:yeoc} (a),
under the assumption of the polynomial tails.
Then Theorem~\ref{thm:yeoc} implies the following:

\begin{corollary}
  For each \(n \geq 0\) and \(1 \leq p < \beta\),
  \[
    d_{LP} (X_n, Y_n) \leq C_{LP}
    \qquad \text{and} \qquad
    d_{W,p} (X_n, Y_n) \leq C_{W,p}
    ,
  \]
  where the constants \(C_{LP}\) and \(C_{W,p}\) depend continuously (only) on
  \(p\) and the constant \(C\) from Theorem~\ref{thm:yeoc} (a).
  In particular, they do not depend on \(n\).
\end{corollary}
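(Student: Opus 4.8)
The plan is to extract, from the single coupling of the full processes provided by Theorem~\ref{thm:yeoc}(a), a coupling of each individual pair \((X_n, Y_n)\) and to read off both distances from the uniform dominating variable \(Z\). Concretely, Theorem~\ref{thm:yeoc}(a) redefines \(\{X_n\}\) and \(\{Y_n\}\) on a common space \((\Omega, \bP)\) without altering their laws, so that \(|X_n - Y_n| \leq Z\) for every \(n\) while \(\bP(Z \geq x) \leq C x^{-\beta}\). Projecting onto the \(n\)-th coordinate yields, for each fixed \(n\), a coupling of the marginals \(X_n\) and \(Y_n\) under which \(|X_n - Y_n| \leq Z\) almost surely. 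Since both \(d_{W,p}\) and \(d_{LP}\) are infima over all couplings, it suffices to bound the relevant functional at this one coupling; the resulting bounds inherit the \(n\)-independence of the tail of \(Z\).

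For the Wasserstein distance I would bound \(d_{W,p}(X_n, Y_n)^p \leq \bE\bigl(|X_n - Y_n|^p\bigr) \leq \bE(Z^p)\) and then evaluate the moment by the layer-cake formula \(\bE(Z^p) = \int_0^\infty p x^{p-1} \bP(Z \geq x)\,dx\). Splitting the integral at the threshold \(x = C^{1/\beta}\), using \(\bP(Z \geq x) \leq 1\) below it and \(\bP(Z \geq x) \leq C x^{-\beta}\) above, the tail integral converges precisely because \(p < \beta\), and a short computation gives \(\bE(Z^p) \leq \frac{\beta}{\beta - p}\, C^{p/\beta}\). Hence \(d_{W,p}(X_n, Y_n) \leq C_{W,p} := \bigl(\beta/(\beta-p)\bigr)^{1/p}\, C^{1/\beta}\), a constant independent of \(n\).

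For the L\'evy--Prokhorov distance I would invoke the standard coupling criterion: if a coupling satisfies \(\bP(|X_n - Y_n| > \eps) \leq \eps\), then \(d_{LP}(X_n, Y_n) \leq \eps\). This is immediate from the inclusion \(\{X_n \in A\} \subseteq \{Y_n \in A^\eps\} \cup \{|X_n - Y_n| > \eps\}\), which yields \(\bP_{X_n}(A) \leq \bP_{Y_n}(A^\eps) + \eps\) for every Borel \(A\), and symmetrically with the roles of \(X_n\) and \(Y_n\) exchanged, matching the definition of \(d_{LP}\). Since \(\bP(|X_n - Y_n| > \eps) \leq \bP(Z \geq \eps) \leq C \eps^{-\beta}\), it suffices to pick \(\eps\) with \(C \eps^{-\beta} \leq \eps\), i.e.\ \(\eps = C_{LP} := C^{1/(\beta+1)}\), again independent of \(n\).

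The essential point—rather than any genuine obstacle—is the recognition that Theorem~\ref{thm:yeoc} supplies a single coupling valid simultaneously for all \(n\), so that the fixed random variable \(Z\) dominates \(|X_n - Y_n|\) for every \(n\) at once; this is exactly what forces the constants to be uniform in \(n\). The remaining moment and tail estimates are elementary, the only quantitative input being the hypothesis \(p < \beta\), which guarantees convergence of the \(p\)-th moment of a variable with \(\beta\)-polynomial tails.
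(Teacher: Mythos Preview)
Your proof is correct and follows essentially the same strategy as the paper: use the coupling supplied by Theorem~\ref{thm:yeoc}(a), bound \(d_{W,p}\) by \((\bE Z^p)^{1/p}\) via the tail estimate on \(Z\), and then deduce the L\'evy--Prokhorov bound. The only difference is in that last step: the paper quotes the inequality \(d_{LP} \leq \sqrt{d_{W,1}}\) from \cite[Theorem~2]{GS02}, whereas you argue directly from the coupling (the easy half of Strassen's characterization), solving \(C\eps^{-\beta}\leq\eps\) to get \(C_{LP}=C^{1/(\beta+1)}\). Your route is self-contained and avoids the external citation; the paper's is a one-liner once the Wasserstein bound is in hand.
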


\begin{proof}
  Let \(n\) be fixed. Theorem~\ref{thm:yeoc} provides us with a 
  coupling of \(X_n\) and \(Y_n\) on a probability space
  \((\Omega, \bP)\) such that \(Z = |X_n - Y_n|\) satisfies
  \(\bP(Z \geq x) \leq C x^{-\beta}\) for all \(x > 0\).
  
  By definition, \(d_{W, p}(X_n, Y_n) \leq \bigl( \bE ( Z^p ) \bigr)^{1/p}\),
  and the bound on \(d_{W, p}(X_n, Y_n)\) follows.
  By \cite[Theorem 2]{GS02}, \(d_{LP} (X_n, Y_n)
  \leq \sqrt{d_{W,1} (X_n, Y_n)}\).
\end{proof}

\begin{remark}
  Our estimates on the distances between \(X_n\) and \(Y_n\) do not depend on
  \(n\). It follows that the distances between their normalized versions,
  such as \(n^{-1/2} X_n\) and \(n^{-1/2} Y_n\),
  converge to zero as \(n\) goes to infinity. 
\end{remark}

\subsection{Disintegration for the \(T\)-invariant measure}
\label{sec:jnu}

Recall that \(\mu\) is the absolutely continuous \(F\)-invariant measure.
Following \cite{Y99},
there exists a unique \(T\)-invariant ergodic probability measure \(\rho\)
on \(M\), with respect to which \(\mu\) is absolutely continuous.

To define the regular measures, we fix
\(R' > K \lambda / (\lambda - 1)\).
Here we show that \(\rho\) fits the setup of 
Theorems~\ref{thm:yeop} and~\ref{thm:yeoc}:

\begin{proposition}
  \label{prop:jnu}
  The measure \(\rho\) is forward regular:
  \(
    \rho = \int_{E} \rho_z \, d\varkappa (z),
  \)
  with jump function \(r \colon E \to \bN_0\) such that
  \(\varkappa(r = n) = \bar{\tau}^{-1} \mu(\tau \geq n)\),
  where \( \bar{\tau} = \int_Y \tau \, d\mu\).
\end{proposition}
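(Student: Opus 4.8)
The plan is to realize $\rho$ via the standard tower (Kakutani skyscraper) formula and then reorganize it by the remaining time to the next return to $Y$. Concretely, since $\mu$ is $F$-invariant and $\bar\tau = \int_Y \tau \, d\mu < \infty$, the measure
\[
  \rho = \frac{1}{\bar\tau}\sum_{n\geq 0} T^n_*\bigl(\mu|_{\{\tau > n\}}\bigr)
\]
is $T$-invariant, a probability measure (its total mass is $\bar\tau^{-1}\int_Y \tau \, d\mu = 1$), and dominates $\bar\tau^{-1}\mu$ through its $n=0$ term, so that $\mu \ll \rho$. By the uniqueness quoted from \cite{Y99} this is the measure $\rho$ of the proposition. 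I would first record these facts, the $T$-invariance being the usual telescoping computation.

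Next I would split each restriction $\mu|_{\{\tau > n\}}$ along the level sets $\{\tau = k\}$, $k > n$, and reindex the double sum by $j = k - n \geq 1$, the number of steps remaining until the orbit returns to $Y$. This turns the formula into $\rho = \bar\tau^{-1}\sum_{j\geq 1}\sigma_j$, where $\sigma_j = \sum_{k\geq j} T^{k-j}_*(\mu|_{\{\tau=k\}})$ has total mass $\sum_{k\geq j}\mu(\tau=k) = \mu(\tau\geq j)$. Setting $E = \{\,j\in\bN : \mu(\tau\geq j) > 0\,\}$ with $\varkappa(\{j\}) = \bar\tau^{-1}\mu(\tau\geq j)$ (a probability measure since $\sum_{j\geq 1}\mu(\tau\geq j) = \bar\tau$), together with $\rho_j = \sigma_j/\mu(\tau\geq j)$ and jump function $r(j) = j$, gives a disintegration $\rho = \int_E \rho_z \, d\varkappa$ with the required law $\varkappa(r=n) = \bar\tau^{-1}\mu(\tau\geq n)$; measurability of $\{\rho_z\}$ is automatic since $E$ is countable.

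It remains to check that each $T^{r(j)}_*\rho_j = T^j_*\rho_j$ is regular, and this is the main point. Since $T^{\tau(y)}y = Fy$ for $y\in Y$, applying $T^j$ collapses $\sigma_j$ back to the base: $T^j_*\sigma_j = \sum_{k\geq j} F_*(\mu|_{\{\tau=k\}}) = F_*(\mu|_{\{\tau\geq j\}})$, a measure supported on $Y$. Its density with respect to $\mu$ is $\psi_j = L\mathbf{1}_{\{\tau\geq j\}}$, where $L$ is the transfer operator $(L\phi)(x) = \sum_{a\in\alpha}\zeta(y_a)\phi(y_a)$ with $y_a = (F|_a)^{-1}(x)$; because $\tau$ is constant on each $a$, this reduces to $\psi_j(x) = \sum_{a:\,\tau(a)\geq j}\zeta(y_a(x))$, a sum of full inverse branches with no cutting inside cylinders. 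The distortion bound \eqref{eq:antt}, applied branchwise with $d(Fy_a(x),Fy_a(x')) = d(x,x')$, then yields $\zeta(y_a(x)) \leq e^{K d(x,x')}\zeta(y_a(x'))$ for every $a$, hence $|\log\psi_j|_d\leq K$. Normalizing by the constant $\mu(\tau\geq j)$ does not change the log-Lipschitz seminorm, so $T^j_*\rho_j$ has density with $|\cdot|_{d,\ell}\leq K < R'$ (the inequality $K < R'$ follows from $R' > K\lambda/(\lambda-1)$), i.e.\ it is regular. The only delicate step is this distortion estimate, which is clean here precisely because $\mathbf{1}_{\{\tau\geq j\}}$ is constant on the partition elements; the stronger constant $K\lambda/(\lambda-1)$ in the definition of $R'$ is what would be needed to propagate regularity under further iteration of $F$, rather than in the present computation.
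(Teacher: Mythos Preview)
Your proof is correct and follows essentially the same approach as the paper: both build $\rho$ from the tower/skyscraper over $(Y,\mu,\tau)$ and verify regularity of the pushed-forward pieces via the distortion bound \eqref{eq:antt} on $\zeta$. The only difference is organizational---the paper indexes the disintegration by pairs $(a,\ell)$ (partition element and current level, with $r(a,\ell)=\tau(a)-\ell$) so that each pushed piece $T^r_*\rho_{a,\ell}=F_*\mu_a$ involves a single inverse branch, whereas you collapse these along fibers of the remaining time $j=\tau(a)-\ell$ to get a coarser one-parameter disintegration with $T^j_*\rho_j=F_*(\mu|_{\{\tau\ge j\}})$ a sum over branches; the log-Lipschitz estimate $\le K<R'$ is the same either way.
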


\begin{proof}
  We start by constructing a Young tower
  \(
    \breve{M} 
    = \{(y, \ell) \in Y \times \bZ \colon 0 \leq \ell < \tau(y) \} 
  \)
  with the tower map
  \[
    \breve{T} (y, \ell) = 
    \begin{cases}
      (y, \ell+1), & \ell < \tau(y) - 1, \\
      (Fy, 0), & \ell = \tau(y) - 1
    \end{cases}
    .
  \]
  The projection \(\pi \colon \breve{M} \to M\), \(\pi(y,\ell) = T^\ell(y)\)
  serves as a semiconjugacy between \(\breve{T}\) and \(T\).
  The natural probability measure
  \(
    \breve{\rho} = 
    \mu \times \text{counting} 
  \)
  on \(\breve{M}\) is \(\breve{T}\)-invariant,
  and its projection \(\rho = \pi_* \breve{\rho}\) is the only
  \(T\)-invariant ergodic probability measure \(M\) 
  such that \(\mu \ll \rho\).

  Using the definition of \(\breve{\rho}\) and 
  \(\pi\), we can write \(\rho\) as
  \[
    \rho
    = \bar{\tau}^{-1}
    \sum_{a \in \alpha} \sum_{\ell=0}^{\tau(a)-1}
    \mu(a) T_*^{\ell} \mu_a
    ,
  \]
  where \(\mu_a\) is the normalized restriction of \(\mu\) to \(a\), i.e.\ 
  \(\mu_a (S) = (\mu(a))^{-1} \mu (a \cap S)\)
  for all \(S \subset M\).
  
  Let \(E = \{ (a, \ell) \in \alpha \times \bZ 
  \colon 0 \leq \ell < \tau(a)\}\) and 
  \(\varkappa(a, \ell) = \bar{\tau}^{-1} \mu(a)\).
  Then \(\varkappa\) is a probability measure on \(E\), and
  \[
    \rho = \sum_{(a, \ell) \in E} \rho_{a,\ell} \, \varkappa(a, \ell),
    \qquad \text{where} \,\,
    \rho_{a, \ell} = T_*^\ell \mu_a
  \]
  is the disintegration we are after.
  Further, let \(r \colon E \to \bZ\), \(r(a, \ell) = \tau(a)-\ell\).
  Then for every \(a, \ell\), 
  the measure \(T_*^{r(a, \ell)} \rho_{a, \ell} = F_* \mu_a\)
  is supported on \(Y\), and its density
  is \(p_a(y) = (\mu(a))^{-1} \zeta({y_a})\), 
  where \(y_a\) is the unique preimage of \(y\)
  in \(a\) under \(F\). 
  By \eqref{eq:antt}, \(T_*^{r(a, \ell)} \rho_{a, \ell}\) is regular.

  Finally,
  \begin{align*}
    \varkappa(r = n) 
    & = \sum_{(a,\ell) \in E} 1_{\ell = \tau(a) -  n} \varkappa(a,\ell)
    = \bar{\tau}^{-1} \sum_{a \in \alpha \colon \tau(a) \geq n} \mu(a)
    = \bar{\tau}^{-1} \mu(\tau \geq n)
    .
  \end{align*}

\end{proof}

\subsection{Intermittent maps}
\label{sec:LSV}

Consider a family of Pomeau-Manneville maps, as in \cite{LSV99},
\(T \colon [0,1] \to [0,1]\),
\[
  T (x) = 
  \begin{cases}
    x ( 1 + 2^\gamma x^\gamma), & x \leq 1/2 \\
    2 x - 1,                    & x > 1/2
  \end{cases}
  ,
\]
where \(\gamma \in (0,1)\) is a parameter. This is a popular example
of maps with polynomial decay of correlations (sharp rate for
H\"older observables is \(n^{1-1/\gamma}\) \cite{G04,H04,S02,Y99}).

Let \(M = [0,1]\). It is standard (see \cite{Y99}) that 
\(T\) fits the setup of Section~\ref{sec:UEM}
with \(Y = [1/2,1]\), and \(\tau\) being
the first return time to \(Y\).

We consider three natural probability measures on \(M\):
\begin{itemize}
  \item \(m\), the Lebesgue measure,
  \item \(\rho\), the unique absolutely continuous measure,
  \item \(\mu\), the absolutely continuous invariant measure
    for the induced map, as in Section~\ref{sec:UEM}.
\end{itemize}

Let \(v \colon M \to \bR^d\) be a bounded observable, 
\(v_n = \sum_{k=0}^{n-1} v \circ T^k\), and
\(X_{m,n} = (v_n, m)\), \(X_{\rho,n} = (v_n, \rho)\)
and \(X_{\mu,n} = (v_n, \mu)\) be the corresponding
random processes.

\begin{theorem}
  \label{thm:sshw}
  The processes \(\{X_{m,n}, n \geq 0\}\),
      \(\{X_{\mu,n}, n \geq 0\}\) and
      \(\{X_{\rho,n}, n \geq 0\}\) can be redefined on the
      same probability space \((\Omega, \bP)\) so that
  \begin{itemize}
    \item 
      \(Z_{m,\mu} = \sup_{n \geq 0} |X_{m,n} - X_{\mu,n}|\)
      satisfies \(\bP (Z_{m, \mu} \geq x) \leq C x^{-1/\gamma}\)
      for \(x > 0\).
    \item 
      \(Z_{m,\rho} = \sup_{n \geq 0} |X_{m,n} - X_{\rho,n}|\)
      satisfies \(\bP (Z_{m, \rho} \geq x) \leq C x^{-1/\gamma + 1}\)
      for \(x > 0\).
    \item 
      \(Z_{\rho,\mu} = \sup_{n \geq 0} |X_{\rho,n} - X_{\mu,n}|\)
      satisfies \(\bP (Z_{\rho, \mu} \geq x) \leq C x^{-1/\gamma + 1}\)
      for \(x > 0\).
  \end{itemize}
  The constant \(C\) depends continuously (only) on \(\gamma\) and \(|v|_\infty\).
\end{theorem}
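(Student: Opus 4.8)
The plan is to verify that each of the three measures \(m\), \(\rho\), \(\mu\) is forward regular with a controlled jump-function tail, and then to couple each of \(m\) and \(\rho\) to the common reference measure \(\mu\) via Theorem~\ref{thm:yeop}, gluing the two couplings along their shared \(\mu\)-marginal so that all three processes live on one space. Concretely, Theorem~\ref{thm:yeop} produces couplings \(\hrho^{(1)}\) of \((m,\mu)\) and \(\hrho^{(2)}\) of \((\rho,\mu)\) on \(M\times M\); disintegrating both over the second (\(\mu\)) coordinate and forming the conditional product yields a measure \(\Pi\) on \(M\times M\times M\) with marginals \(m,\rho,\mu\), under which the first and third coordinates are distributed as \(\hrho^{(1)}\) and the second and third as \(\hrho^{(2)}\). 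Taking \(X_{m,n},X_{\rho,n},X_{\mu,n}\) to be \(v_n\) of the three coordinates, Remark~\ref{rmk:aaggg} gives \(Z_{m,\mu}\le 2|v|_\infty\, s\) evaluated on the first and third coordinates, and likewise \(Z_{\rho,\mu}\le 2|v|_\infty\, s\) on the second and third, while \(Z_{m,\rho}\le Z_{m,\mu}+Z_{\rho,\mu}\) by the triangle inequality. Thus the three tail estimates will follow from the tail bounds on \(s\) in Theorem~\ref{thm:yeop}(a).

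Two of the measures are immediate. Since \(d\mu/d\mu\equiv1\) has \(|\log1|_d=0\le R'\), the measure \(\mu\) is regular, hence forward regular with jump function \(r\equiv0\). For \(\rho\), Proposition~\ref{prop:jnu} gives forward regularity with \(\varkappa(r=n)=\bar{\tau}^{-1}\mu(\tau\ge n)\), so
\[
  \varkappa(r\ge n)=\bar{\tau}^{-1}\sum_{k\ge n}\mu(\tau\ge k).
\]
It is standard for this family (see~\cite{Y99}) that \(\mu(\tau\ge n)\le Cn^{-1/\gamma}\); since \(1/\gamma>1\) the sum converges and \(\varkappa(r\ge n)\le Cn^{-1/\gamma+1}\). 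I also record \(\mu(\tau\ge n)\le Cn^{-1/\gamma}\) for use below.

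The substantive step is the forward regularity of Lebesgue measure \(m\), which I would obtain from a disintegration by first hitting time to \(Y\). Writing \(h(x)=\inf\{k\ge0\colon T^kx\in Y\}\), the level sets \(A_0=Y\) and \(A_k=\{h=k\}\subset[0,1/2)\) for \(k\ge1\) are intervals accumulating at the neutral fixed point, and \(T^k\) maps \(A_k\) bijectively onto \(Y\). Disintegrating \(m=\sum_{k\ge0}m(A_k)\,m_{A_k}\), where \(m_{A_k}\) is the normalized restriction of \(m\) to \(A_k\), with jump function \(r(A_k)=k\), one checks for \(k\ge1\) that \(A_k=T(a_{k+1})\) with \(a_{k+1}=\{\tau=k+1\}\subset Y\), and since \(T|_Y\) is affine this gives \(T^k_*m_{A_k}=F_*m_{a_{k+1}}\) (while \(T^0_*m_{A_0}=m_Y\) is regular directly). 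The key point is that \(F_*\) preserves regularity: by~\eqref{eq:antt} and the \(\lambda\)-expansion of \(F\), the pushforward \(F_*m_{a_{k+1}}\) has log-density on \(Y\) with Lipschitz constant at most \(|\log(d\mu/dm)|_d/\lambda+K\le K\lambda/(\lambda-1)<R'\), where we use the standard bound \(|\log(d\mu/dm)|_d\le K\lambda/(\lambda-1)\) for the invariant density. Hence each \(T^k_*m_{A_k}\) is regular and \(m\) is forward regular. Finally, since \(T|_Y\) has slope \(2\) we have \(m(A_k)=2m(a_{k+1})\), so
\[
  \varkappa(r\ge n)=2\,m(\tau\ge n+1)\le 2K\,\mu(\tau\ge n+1)\le Cn^{-1/\gamma}.
\]
I expect this step — verifying the uniform-in-\(k\) distortion of the escape branches \(T^k\colon A_k\to Y\) and pinning the regularity constant below \(R'\) — to be the main obstacle, though it reduces cleanly to~\eqref{eq:antt} and the affine first step.

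With all three measures forward regular, I would finish by applying Theorem~\ref{thm:yeop}(a) to \((m,\mu)\) with \(\beta=1/\gamma\), using \(\varkappa(r\ge n)\le Cn^{-1/\gamma}\) and \(\mu(\tau\ge n)\le Cn^{-1/\gamma}\), to get \(\hrho^{(1)}(s\ge n)\le Cn^{-1/\gamma}\); and to \((\rho,\mu)\) with \(\beta=1/\gamma-1\), the binding exponent coming from \(\varkappa(r\ge n)\le Cn^{-1/\gamma+1}\), to get \(\hrho^{(2)}(s\ge n)\le Cn^{-1/\gamma+1}\). After gluing along \(\mu\) as above, the estimates \(\bP(Z_{m,\mu}\ge x)\le Cx^{-1/\gamma}\) and \(\bP(Z_{\rho,\mu}\ge x)\le Cx^{-1/\gamma+1}\) follow from Remark~\ref{rmk:aaggg} by rescaling \(x\) by \(2|v|_\infty\), and
\[
  \bP(Z_{m,\rho}\ge x)\le \bP(Z_{m,\mu}\ge x/2)+\bP(Z_{\rho,\mu}\ge x/2)\le Cx^{-1/\gamma+1},
\]
since the slower-decaying term dominates. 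Tracking constants through~\eqref{eq:antt} shows \(C\) depends only on \(\gamma\) and \(|v|_\infty\).
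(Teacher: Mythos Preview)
Your proposal is correct and follows essentially the same route as the paper: verify that \(m\) and \(\rho\) are forward regular with jump-function tails \(n^{-1/\gamma}\) and \(n^{-1/\gamma+1}\) respectively (the paper uses the extended first-entry time \(\tau\ge 1\) on all of \(M\) rather than your hitting time \(h\ge 0\), but this is the same partition up to one application of \(F\) on \(Y\)), then couple each to \(\mu\) and glue along the common \(\mu\)-marginal. The paper packages the last two steps by citing Theorem~\ref{thm:yeoc} and Lemma~\ref{lemma:joc}, whereas you invoke Theorem~\ref{thm:yeop} plus Remark~\ref{rmk:aaggg} and carry out the conditional-product gluing explicitly; these are the same argument.
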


\begin{proof}
  We write \(a \ll b\), if there is a constant \(C\) which 
  depends continuously only on \(\gamma\) such that \(a \leq C b\).

  It is enough to show that with an appropriate choice of
  the constant \(R'\) in Definition~\ref{defn:reg},
  the measures \(m\) and \(\rho\) are forward regular:
  \begin{itemize}
    \item[(a)]
      \(
        m = \int_{E_m} m_z \, d\varkappa_m(z)
      \)
      with \(r_m \colon E_m \to \bN_0\)
      for which \(T^{r_m(z)}_* m_z\) are regular probability measures.
      Also, \(\varkappa_m( r_m \geq n) \ll n^{-1/\gamma}\) for all \(n > 0\).

    \item[(b)]
      \(
        \rho = \int_{E_\rho} \rho_z \, d\varkappa_\rho(z)
      \)
      with \(r_\rho \colon E_\rho \to \bN_0\)
      for which \(T^{r_\rho(z)}_* \rho_z\) are regular probability measures.
      Also, \(\varkappa_\rho (r_\rho \geq n) \ll n^{-1/\gamma + 1}\)
      for all \(n > 0\).
  \end{itemize}
  Then the results follow from Theorem~\ref{thm:yeoc} and Lemma~\ref{lemma:joc}.
  
  We use the bound \(\mu(\tau \geq n) \ll n^{-1/\gamma}\), 
  (see \cite{K16} for the proof with uniform constants).
  By Proposition~\ref{prop:jnu}, \(\rho\) is forward regular and
  \[
    \varkappa_\rho(r_\rho \geq n) = \sum_{k \geq n} \varkappa_\rho(r_\rho = k)
    = \bar{\tau}^{-1} \sum_{k \geq n} \mu(\tau \geq k)
    \ll n^{-1/\gamma + 1}
    .
  \]
  This proves (b). Further we prove (a).

  We extend \(\tau \colon Y \to \bN\) to \(\tau \colon M \to \bN\)
  by \(\tau(x) = \min \{ k \geq 1 \colon T^k (x) \in Y\}\),
  and accordingly set \(F \colon M \to Y\), 
  \(F(x) = T^{\tau(x)}(x)\), extending the previous definition.

  It is standard \cite{K16} that \(M\) can be partitioned 
  (modulo a zero measure set) into countably many 
  subintervals \([a_k, b_k]\), \(k \in \bN\), on which
  \(\tau\) is constant, and \(F \colon [a_k, b_k] \to Y\)
  is a diffeomorphism with bounded distortions, i.e.\ 
  \begin{equation}
    \label{eq:ann}
    \Bigl| \log \frac{F'(x)}{F'(y)} \Bigr|
    \ll |F(x) - F(y)|
    \qquad \text{ for all } k \text{ and } x,y \in [a_k, b_k].
  \end{equation}
  Further, \(m(\tau \geq n) \ll n^{-1/\gamma}\).

  Let \(m_k\) denote the normalized Lebesgue measure on \([a_k, b_k]\).
  It follows from~\eqref{eq:ann} and~\eqref{eq:antt} that \(F_* m_k\) is a regular
  measure with \(R'\) depending continuously (only) on \(\gamma\).

  It follows that \(m\) is forward regular:
  \(
    m = \sum_{k \in \bN} \varkappa_m(k) m_k,
  \)
  with the probability space \((\bN, \varkappa_m)\),
  \(\varkappa_m(k) = |b_k - a_k|\), and \(r_m \colon \bN \to \bN_0\),
  \(r_m(k) = \tau\bigr|_{[a_k, b_k]}\).
  
  Finally, observe that \(\varkappa_m( r_m \geq n) \ll n^{-1/\gamma}\).
\end{proof}

\subsection{Almost sure invariance principle}
\label{sec:asip}

Let \(v \colon M \to \bR^d\), and \(v_n = \sum_{k=0}^{n-1} v \circ T^k\).
Recall that \(\mu\) is the absolutely continuous \(F\)-invariant measure
on \(Y\).
Let \(\rho\) be the \(T\)-invariant measure on \(M\) as in 
Subsection~\ref{sec:jnu}. Suppose that \(\int_M v \, d\rho = 0\).
Let \(X_n = (v_n, \rho)\) and \(Y_n = (v_n, \mu)\).

Under the assumptions that \(\tau \in L^p\), \(p > 2\) and 
\(v\) is H\"older continuous, Melbourne and Nicol prove in \cite{MN05,MN09} the 
ASIP for \(Y_n\) (with rates), and claim the ASIP for \(X_n\).
However, their argument does not cover the transition from \(Y_n\) to \(X_n\).
Here we close this gap.

\begin{theorem}
  \label{thm:vahu}
  The ASIP for \(X_n\) is equivalent to the ASIP for \(Y_n\), with the same rates.
\end{theorem}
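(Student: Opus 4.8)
The plan is to obtain both implications from the coupling of Theorem~\ref{thm:yeoc}, using only its unconditional conclusion that $Z = \sup_{n \geq 0} |X_n - Y_n|$ is almost surely finite (the tail estimates (a)--(c) are not needed here). First I would verify that the theorem applies. The measure $\mu$ is regular: taking $\phi \equiv 1$ in Definition~\ref{defn:reg} gives $|\phi|_{d,\ell} = |\log \phi|_d = 0 \leq R'$, so $\mu$ is trivially forward regular with jump function $r \equiv 0$. The measure $\rho$ is forward regular by Proposition~\ref{prop:jnu}. Hence Theorem~\ref{thm:yeoc}, applied with $\rho_1 = \rho$ and $\rho_2 = \mu$, furnishes a probability space carrying $\{X_n\}$ and $\{Y_n\}$ (with the correct marginal laws) on which $Z < \infty$ almost surely. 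Since $Z$ is a single almost surely finite random variable, $|X_n - Y_n| \leq Z = O(1) = o(n^\beta)$ almost surely for every $\beta > 0$.

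Now assume the ASIP holds for $Y_n$ with rate $\beta < 1/2$: on a suitable space there is a Brownian motion $W$ with $Y_n = W_n + o(n^\beta)$ almost surely. The central step is to combine this with the coupling above, and the point is that both constructions share the same marginal law of the process $\{Y_n\}$. I would therefore glue them along this common marginal: because the path spaces $(\bR^d)^{\bN_0}$ and $C([0,\infty),\bR^d)$ are Polish, regular conditional distributions exist, and the standard gluing lemma for couplings produces a single probability space carrying $\{X_n\}$, $\{Y_n\}$ and $W$ such that $(\{X_n\},\{Y_n\})$ has the law of the Theorem~\ref{thm:yeoc} coupling and $(\{Y_n\},W)$ has the law of the ASIP coupling --- concretely one takes $\{X_n\}$ and $W$ conditionally independent given $\{Y_n\}$. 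On this space both estimates hold at once, so
\[
  X_n = Y_n + (X_n - Y_n) = W_n + o(n^\beta) + O(1) = W_n + o(n^\beta)
  \qquad \text{almost surely,}
\]
which is exactly the ASIP for $X_n$ with the same Brownian motion $W$ and the same rate $\beta$.

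The converse implication is symmetric: since both $\mu$ and $\rho$ are forward regular, the roles of $X_n$ and $Y_n$ in the Theorem~\ref{thm:yeoc} coupling can be interchanged, and the same gluing argument deduces the ASIP for $Y_n$ from the ASIP for $X_n$. I expect the only delicate point to be the gluing step, where measurability and the Polish structure of the path spaces are genuinely used; the rest rests on the elementary observation that an almost surely finite random bound is $o(n^\beta)$ and hence negligible at the ASIP scale.
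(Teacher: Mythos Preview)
Your proof is correct and follows essentially the same route as the paper: verify forward regularity of the two measures (via Proposition~\ref{prop:jnu} for $\rho$, trivially for $\mu$), apply Theorem~\ref{thm:yeoc} to get $\sup_n|X_n-Y_n|<\infty$ almost surely, and then glue this coupling with the ASIP coupling along the common marginal using the joining-of-couplings lemma (Lemma~\ref{lemma:joc} in the paper, which is exactly the ``standard gluing lemma'' you invoke). The only cosmetic difference is that the paper spells out the direction $X_n\Rightarrow Y_n$ first and glues along the $X_n$ marginal, while you do $Y_n\Rightarrow X_n$ gluing along $Y_n$; both are equally valid.
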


\begin{remark}
  In~\cite{MN05,MN09}, the authors prove the ASIP for nonuniformly expanding systems
  and then extend the result to nonuniformly hyperbolic systems \cite[Section 3]{MN05}.
  In Theorem~\ref{thm:vahu}, \(T\) is a nonuniformly expanding system,
  but proving it, we close the gap in both situations.
\end{remark}

\begin{proof}[Proof of Theorem~\ref{thm:vahu}]
  Assume the ASIP for \(X_n\) as in Definition \ref{defn:ASIP},
  with a Brownian motion \(W_n\) and rate \(o(n^\beta)\).
  
  Proposition~\ref{prop:jnu} allows us to use Theorem~\ref{thm:yeoc} to
  redefine the processes \(\{X_n, n \geq 0\}\) and \(\{Y_n, n \geq 0\}\)
  on the same probability space so that
  \(\sup_{n \geq 0} |X_n - Y_n|\) is finite almost surely.
  
  Using Lemma~\ref{lemma:joc}, we can redefine \(\{X_n, n \geq 0\}\),
  \(\{Y_n, n \geq 0\}\) and \(W_t\)
  on the same probability space so that 
  \(\sup_{n \geq 0} |X_n - Y_n| < \infty\) and
  \(X_n = W_n + o( n^{\beta} )\) almost surely. 
  Then also \(Y_n = W_n + o( n^{\beta} )\) almost surely.
  
  We proved that the ASIP for \(X_n\) implies the ASIP for \(Y_n\),
  with the same rates. The same argument proves the other direction.
\end{proof}

\section{Proof of Theorems~\ref{thm:yeop} and~\ref{thm:yeoc}}
\label{sec:proofs}

\subsection{Outline of the proof}

Recall that \(\mu\) is the absolutely continuous probability measure, invariant
under the induced map \(F\).
To prove Theorem~\ref{thm:yeop}, we:
\begin{enumerate}[label=(\alph*)]
  \item\label{thm:yeop:rpu} Build (Subsection~\ref{sec:dis}) a countable probability space \(\cA\)
    with a function \(t \colon \cA \to \bN_0\)
    and show that if \(\rho\) is a probability measure such that \(T_*^n \rho\) is 
    regular for some \(n \geq 0\), then \(\rho\) has a representation
    \begin{equation}
      \label{eq:jnngg}
      \rho = \sum_{a \in \cA} \bP(a) \rho_k
      \qquad \text{with } T_*^{n + t(a)} \rho_a = \mu
      \text{ for all } a
      ,
    \end{equation}
    where \(\bP\) is a probability measure on \(\cA\).
    (C.f.\ \emph{regenerative partition of unity} in \cite{Z09}).
  \item Show that the tails \(\bP(t \geq n)\) can be bounded uniformly for all
    regular measures (Subsection~\ref{sec:tails}).
  \item\label{thm:yeop:diss}
    Consider a particularly simple case, when \(\rho\) is such that \(T_*^n \rho = \mu\)
    for some \(n \geq 0\). Then we take \(\hrho = (U_n)_* \rho\), where
    \(U_n \colon M \to M \times M\), \(U_n(x) = (x, T^n x)\).
    We observe that the marginals of \(\hrho\) on the first and second coordinates
    are \(\rho\) and \(\mu\) respectively and \(\hrho(s \geq n) = 0\).
  \item\label{thm:yeop:dis} 
    The procedure in \ref{thm:yeop:diss} transparently extends to weighted sums of measures,
    as in \eqref{eq:jnngg}.
    We take 
    \[
      \hrho = \sum_{a \in \cA} \bP(a) (U_{n + t(a)})_* \rho_a
      .
    \]
    Observe that then \(\hrho(s \geq n + k ) \leq \bP(t \geq k)\) for all \(k \geq 0\).
  \item Now, \ref{thm:yeop:rpu} and \ref{thm:yeop:dis} already prove 
    Theorem~\ref{thm:yeop} for the case when \(T_*^n \rho\)
    is regular. In Subsection~\ref{proof:thm:yeop} we extend this to the class of
    all forward regular measures.
\end{enumerate}

The idea of the proof of Theorem~\ref{thm:yeoc} is that
if \(\rho_1\) and \(\rho_2\) are forward regular measures,
then each of them can be coupled with
\(\mu\) in the sense of Theorem~\ref{thm:yeop}.
Then we couple \(\rho_1\) and \(\rho_2\) through their couplings with \(\mu\) by
a standard argument in Probability Theory, see Appendix~\ref{app:joc}.

\subsection{Disintegration}
\label{sec:dis}

Let $P\colon L^1(Y)\to L^1(Y)$ be the transfer operator corresponding
to $F$ and $\mu$, so 
$\int_Y P\phi\,\psi\,d\mu=\int_Y\phi\,\psi\circ F\,d\mu$
for all $\phi\in L^1$ and $\psi\in L^\infty$.
Then $P \phi$ is given explicitly by
\[
  (P \phi)(y) = \sum_{a \in \alpha} \zeta(y_a) \phi(y_a),
\]
where $y_a$ is the unique preimage of $y$ under $F$ lying in $a$.

Recall that \(R'\) is a fixed constant, and \(R' > K \lambda / (\lambda -1)\).
Let \(R = \lambda(R' - K)\). Then \(R > K + \lambda^{-1} R\).
Choose \(\xi \in (0,e^{-R})\) such that 
\(R (1-\xi e^R) \geq K + \lambda^{-1} R\).

\begin{proposition}
  \label{prop:bamh}
  Assume that \(\phi \colon Y \to (0, \infty)\) is such that
  \(|\phi|_{d, \ell} \leq R'\).
  Then \(\phi = \xi \int_Y \phi \, d\mu + \psi\), where
  \(|\psi|_{d, \ell} \leq R\).
  In addition, \(|P(1_a \psi)|_{d, \ell} \leq R'\)
  for every \(a \in \alpha\).
\end{proposition}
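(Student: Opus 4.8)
The plan is to take the obvious candidate $\psi := \phi - \xi\int_Y\phi\,d\mu$, so that the decomposition $\phi = \xi\int_Y\phi\,d\mu + \psi$ holds by construction, and then verify the two assertions. Throughout I will lean on the three arithmetic facts that the constants have been rigged to satisfy: since $R=\lambda(R'-K)$ we have $\lambda^{-1}R = R'-K$, hence $K+\lambda^{-1}R = R'$; the hypothesis $R' > K\lambda/(\lambda-1)$ gives $R\ge R'$; and the choice of $\xi$ gives $R(1-\xi e^R)\ge K+\lambda^{-1}R = R'$. I will also use repeatedly that $\diam Y \le 1$, so that $|\log\phi|_d\le R'$ yields the oscillation bound $\sup_Y\phi \le e^{R'}\inf_Y\phi$.

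For the first assertion, write $c=\xi\int_Y\phi\,d\mu$, a constant in $(0,\xi\sup_Y\phi]$. Positivity of $\psi$ will follow from $c\le\xi\sup_Y\phi\le\xi e^{R'}\inf_Y\phi<\inf_Y\phi$, the last inequality using $\xi<e^{-R}\le e^{-R'}$. For the log-Lipschitz bound, fix $x,y\in Y$ and assume without loss of generality $\phi(x)\ge\phi(y)$. Setting $u=\log\phi(x)-\log\phi(y)\in[0,R'd(x,y)]$ and $t=c/\phi(y)$, one computes $\log\psi(x)-\log\psi(y)=\log\frac{e^u-t}{1-t}$. The main step is to bound this quantity: since $\frac{d}{ds}\log(e^s-t)=e^s/(e^s-t)=1/(1-te^{-s})$ is decreasing in $s\ge0$, its maximum on $[0,u]$ sits at $s=0$, giving $\log\frac{e^u-t}{1-t}\le u/(1-t)$. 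Finally $t\le\xi\sup_Y\phi/\inf_Y\phi\le\xi e^{R'}\le\xi e^R$, so $1/(1-t)\le 1/(1-\xi e^R)$, whence
\[
  |\log\psi(x)-\log\psi(y)|\le\frac{R'\,d(x,y)}{1-\xi e^R}\le R\,d(x,y),
\]
the last step being precisely $R'\le R(1-\xi e^R)$. This gives $|\psi|_{d,\ell}\le R$.

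For the second assertion, recall that $(P(1_a\psi))(y)=\zeta(y_a)\psi(y_a)$, where $y_a\in a$ is the unique $F$-preimage of $y$. For $x,y\in Y$ with preimages $x_a,y_a\in a$ I will chain two estimates: the distortion bound \eqref{eq:antt} gives $|\log\zeta(x_a)-\log\zeta(y_a)|\le K\,d(Fx_a,Fy_a)=K\,d(x,y)$, while the bound just obtained, combined with the expansion $d(x_a,y_a)\le\lambda^{-1}d(Fx_a,Fy_a)=\lambda^{-1}d(x,y)$, gives $|\log\psi(x_a)-\log\psi(y_a)|\le R\lambda^{-1}d(x,y)$. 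Adding these, $|\log(P(1_a\psi))(x)-\log(P(1_a\psi))(y)|\le(K+\lambda^{-1}R)\,d(x,y)=R'\,d(x,y)$, so $|P(1_a\psi)|_{d,\ell}\le R'$.

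The only genuinely delicate point is the log-Lipschitz estimate for $\psi$: subtracting a positive constant from $\phi$ \emph{enlarges} its relative oscillations, so one cannot simply inherit $|\log\phi|_d\le R'$. The monotonicity argument quantifies the enlargement by the factor $1/(1-t)$, and the smallness of $\xi$ is there exactly to keep $t$ small enough that this factor does not push the constant past $R$. I expect all the care to concentrate here; the transfer-operator step is then a routine chaining. It is worth noting that the constants close \emph{exactly} — the final bound in the second assertion comes out as $R'$ rather than merely some constant — which is what makes the proposition iterable in the subsequent construction.
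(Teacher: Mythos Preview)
Your proof is correct. The paper does not give its own argument here but simply cites \cite[Propositions~3.1 and~3.2]{KKM16}; your self-contained derivation is exactly the kind of computation that lies behind that citation, and every step checks out --- in particular the mean-value bound $\log\frac{e^u-t}{1-t}\le u/(1-t)$, the estimate $t\le\xi e^{R'}\le\xi e^R$ (using $R>R'$), and the exact closing $K+\lambda^{-1}R=R'$ in the transfer-operator step.
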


\begin{proof}
  See \cite[Propositions 3.1 and 3.2]{KKM16}.
\end{proof}

Let \(\cA\) denote the countable set of all finite words in the alphabet
\(\alpha\), including the empty word. For \(a \in \cA\), let 
\([a]\) denote the subset of words in \(\cA\)
which begin with \(a\). Let \(\ell(a)\) denote the length of \(a\).
Define \(t \colon \cA \to \bZ\), 
\(t(a)=\sum_{k=1}^{\ell(a)} \tau(a_k)\), where \(a_k\) is the \(k\)-th
letter of \(a\).

\begin{proposition}
  \label{prop:vaoe}
  Let \(\rho\) be a probability measure on \(M\) such that
  \(T_*^n \rho\) is regular for some \(n \geq 0\). Then there is a
  decomposition
  \(\rho = \xi \rho' + \sum_{a \in \alpha} r_a \rho_a\),
  where \(\rho'\) and all \(\rho_a\) are probability measures
  and \(r_a > 0\), such that
  \begin{itemize}
    \item
      \(
        e^{-R} (1-\xi) \mu(a)
        \leq r_a \leq
        e^{ R} (1-\xi) \mu(a)
      \),
    \item \(T_*^n \rho' = \mu\),
    \item \(T_*^{n+\tau(a)} \rho_a\) is a regular measure
      for every \(a \in \alpha\).
  \end{itemize}
\end{proposition}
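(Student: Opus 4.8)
The plan is to first push everything forward to the regular measure $\sigma = T_*^n\rho$, decompose $\sigma$ using the partition $\alpha$ and Proposition~\ref{prop:bamh}, and then lift that decomposition back to $\rho$ by disintegrating over the map $T^n$. Write $d\sigma = \phi\,d\mu$ with $|\phi|_{d,\ell}\le R'$. Since $\sigma$ is a probability measure supported on $Y$ we have $\int_Y\phi\,d\mu=1$, and since $|\log\phi|_d$ is finite while $\phi\not\equiv 0$, necessarily $\phi>0$ everywhere on $Y$ (a zero of $\phi$ together with a point where $\phi>0$ would make $|\log\phi|_d=\infty$). Proposition~\ref{prop:bamh} then gives $\phi=\xi+\psi$ with $\psi>0$, $|\psi|_{d,\ell}\le R$, and $|P(1_a\psi)|_{d,\ell}\le R'$ for every $a\in\alpha$, so that $\sigma=\xi\mu+\sum_{a\in\alpha}1_a\psi\,d\mu$. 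I set $r_a=\int_a\psi\,d\mu$ and let $\sigma_a=r_a^{-1}\,1_a\psi\,d\mu$, a probability measure supported on $a$.

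The first computation is to check that each $T_*^{\tau(a)}\sigma_a$ is regular. Since $\tau\equiv\tau(a)$ on $a$ and $F|_a\colon a\to Y$ is a bijection, $T^{\tau(a)}$ agrees with $F$ on $a$, so $T_*^{\tau(a)}\sigma_a=F_*\sigma_a$. Using the transfer-operator identity $F_*(g\,d\mu)=(Pg)\,d\mu$ I would write $T_*^{\tau(a)}\sigma_a=r_a^{-1}\,P(1_a\psi)\,d\mu$; this is a probability measure because $P$ preserves $\mu$-mass (so $\int_Y P(1_a\psi)\,d\mu=r_a$), and its density has $|\cdot|_{d,\ell}\le R'$ by Proposition~\ref{prop:bamh}, hence it is regular. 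The weight bounds then follow from $\diam Y\le 1$: from $|\log\psi|_d\le R$ one has $\psi(y)\le e^{R}\psi(y')$ for all $y,y'\in Y$; averaging this over $y'$ against $\mu$ and using $\int_Y\psi\,d\mu=1-\xi$ gives $\psi(y)\le e^{R}(1-\xi)$, and symmetrically $\psi(y)\ge e^{-R}(1-\xi)$, so integrating over $a$ yields $e^{-R}(1-\xi)\mu(a)\le r_a\le e^{R}(1-\xi)\mu(a)$.

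The final step is to transport this decomposition of $\sigma$ up to $\rho$. Since $\sigma=T_*^n\rho$, I would disintegrate $\rho=\int_Y\rho_y\,d\sigma(y)$ with conditional probability measures $\rho_y$ supported on the fibers $(T^n)^{-1}(y)$, so that $T_*^n\rho_y=\delta_y$. Because $\phi>0$, both $\mu$ and every $\sigma_a$ are absolutely continuous with respect to $\sigma$, so I may reintegrate the \emph{same} family $\{\rho_y\}$ against them: define $\rho'=\int_Y\rho_y\,d\mu(y)$ and $\rho_a=\int_Y\rho_y\,d\sigma_a(y)$. These are probability measures, and by linearity $\xi\rho'+\sum_a r_a\rho_a=\int_Y\rho_y\,d\sigma(y)=\rho$; moreover $T_*^n\rho'=\int_Y\delta_y\,d\mu(y)=\mu$, while $T_*^n\rho_a=\sigma_a$ gives $T_*^{n+\tau(a)}\rho_a=T_*^{\tau(a)}\sigma_a$, which is regular by the previous paragraph. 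Finally $r_a>0$ since $\psi>0$, and $\xi+\sum_a r_a=\xi+(1-\xi)=1$ is consistent with $\rho$ being a probability measure.

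The step I expect to be the main obstacle is this last lifting: justifying the disintegration $\rho=\int_Y\rho_y\,d\sigma(y)$ and the reintegration against $\mu$ and the $\sigma_a$ on the space $M$, which is only assumed to be a metric space. I expect it to go through via a standard disintegration theorem, the key point being that only the countably many absolutely-continuous base measures $\mu$ and $\{\sigma_a\}_{a\in\alpha}$ enter, all dominated by $\sigma$, so the single conditional family $\{\rho_y\}$ can be used uniformly; everything else (the mass bounds and the regularity of the pushforwards) is a direct consequence of Proposition~\ref{prop:bamh} and the transfer-operator formalism.
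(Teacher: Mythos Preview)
Your argument at the level of $\sigma=T_*^n\rho$ is essentially identical to the paper's: the same use of Proposition~\ref{prop:bamh} to split $\phi=\xi+\psi$, the same definition of $r_a$ and $\sigma_a$, the same verification that $T_*^{\tau(a)}\sigma_a$ has density $r_a^{-1}P(1_a\psi)$ and is therefore regular, and the same oscillation bound $e^{-R}(1-\xi)\le\psi\le e^{R}(1-\xi)$ giving the estimate on $r_a$.

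The difference is in the lifting step, and here the paper's route is both simpler and sidesteps exactly the obstacle you flag. Rather than disintegrating $\rho$ over the fibres of $T^n$ and reintegrating against $\mu$ and the $\sigma_a$, the paper simply \emph{defines} $\rho'$ and $\rho_a$ by their Radon--Nikodym derivatives with respect to $\rho$:
\[
  \frac{d\rho'}{d\rho}=\frac{d\mu}{d\sigma}\circ T^n=\frac{1}{\phi}\circ T^n,
  \qquad
  \frac{d\rho_a}{d\rho}=\frac{d\sigma_a}{d\sigma}\circ T^n=\frac{\psi_a}{\phi}\circ T^n,
\]
where $\psi_a=r_a^{-1}1_a\psi$. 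Since $\phi>0$, these are well-defined nonnegative densities; a one-line change-of-variables shows $T_*^n\rho'=\mu$ and $T_*^n\rho_a=\sigma_a$, and summing the densities gives $\bigl((\xi+\sum_a r_a\psi_a)/\phi\bigr)\circ T^n=1$, hence $\rho=\xi\rho'+\sum_a r_a\rho_a$. No disintegration theorem is needed, and no structural hypothesis on $M$ beyond what is already assumed. Your conditional-measure construction would produce the same $\rho'$ and $\rho_a$ whenever the disintegration exists, but the paper's pullback-of-densities trick is the cleaner way to carry it out and removes the issue you anticipated.
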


\begin{proof}
  Let \(\chi = T_*^n \rho\). Since \(\chi\) is regular probability measure,
  there exists \(\phi \colon Y \to (0, \infty)\) such that
  \(|\phi|_{d, \ell} \leq R'\),
  \(d\chi = \phi \, d\mu\) and \(\int_Y \phi \, d\mu = 1\).
  
  By Proposition~\ref{prop:bamh},
  \( \phi = \xi + \psi\), where \(|\psi|_{d, \ell} \leq R\).
  For \(a \in \alpha\), define \(r_a = \int_a \psi \, d\mu\) 
  and \(\psi_a = r_a^{-1} 1_a \psi \).
  Then \(\int_Y \psi_a \, d\mu = 1\) and
  by Proposition~\ref{prop:bamh}, \(|P \psi_a|_{d, \ell} \leq R'\).
  Define \(\chi_a\) to be a probability 
  measure on \(M\) given by 
  \( d \chi_a = \psi_a \, d\mu\).
  Then \(T_*^{\tau(a)} \chi_a\) is
  a regular probability measure with density \(P \psi_a\). 
  
  Observe that
  \begin{equation}
    \label{eq:hhja}
    \chi = \xi \mu + \sum_{a \in \alpha} r_a \chi_a.
  \end{equation}
  
  By \cite[(3.1)]{KKM16},
  \[
    e^{-R} (1-\xi) =
    e^{-R} \int_Y \psi \, d\mu 
    \leq \psi \leq 
    e^{ R} \int_Y \psi \, d\mu 
    = e^R (1-\xi).
  \]
  Therefore 
  \(
    e^{-R} (1-\xi) \mu(a) 
    \leq r_a \leq 
    e^{ R} (1-\xi) \mu(a)
  \).
  
  Now we use~\eqref{eq:hhja} to decompose \(\rho\) similarly.
  Define \(\rho'\) to be a measure on \(M\) given by
  \(
    \frac{d\rho'}{d\rho} = \frac{d\mu}{d\chi} \circ T^n.
  \)
  Then \(T_*^n \rho' = \mu\).
  Similarly define \(\rho_a\), \(a \in \alpha\) by
  \(
    \frac{d\rho_a}{d\rho} = \frac{d\chi_a}{d\chi} \circ T^n.
  \)
  Then \(T_*^n \rho_a = \chi_a\). Finally note that
  \(\rho = \xi \rho' + \sum_{a \in \alpha} r_a \rho_a\).
\end{proof}

\begin{lemma}
  \label{lemma:b87e}
  Let \(n \geq 0\) and \(\rho\) be a probability measure 
  on \(M\) such that \(T_*^n \rho\) is regular.
  There exists a probability measure \(\bP\) on \(\cA\)
  and a disintegration
  \begin{equation}
    \label{eq:atyi}
    \rho = \sum_{a \in \cA} \bP(a) \rho_a,
  \end{equation}
  where \(\rho_a\), \(a \in \alpha\) are probability measures on \(M\)
  such that \(T_*^{n+t(a)} \rho_a = \mu\).
  The measure \(\bP\) satisfies 
  \begin{equation}
    \label{eq:ajiw}
    \begin{gathered}
      \bP(\ell = k) = (1-\xi)^k \xi,
      \\
      e^{-R} (1-\xi) \mu(a_{k+1}) 
      \leq \, \bP([a_1 \cdots a_{k+1}] \mid [a_1 \cdots a_k]) 
      \leq e^{ R} (1-\xi) \mu(a_{k+1}),
    \end{gathered}
  \end{equation}
  for all \(k \geq 0\) and \(a_1, \ldots, a_{k+1} \in \alpha\).
\end{lemma}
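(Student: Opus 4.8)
The plan is to iterate Proposition~\ref{prop:vaoe} along the tree of finite words \(\cA\). Starting from \(\sigma_\emptyset = \rho\) (for which \(T_*^{n+t(\emptyset)}\sigma_\emptyset = T_*^n\rho\) is regular by hypothesis), I apply Proposition~\ref{prop:vaoe} at level \(n\) to write \(\rho = \xi\rho_\emptyset + \sum_{b\in\alpha}r_{\emptyset,b}\sigma_b\), where \(T_*^n\rho_\emptyset = \mu\) and each \(\sigma_b\) is a probability measure with \(T_*^{n+\tau(b)}\sigma_b\) regular. Since \(t(b) = \tau(b)\), the measure \(\sigma_b\) again satisfies the hypothesis of Proposition~\ref{prop:vaoe} at level \(n + t(b)\), so the construction recurses: for every word \(w\) I obtain a probability measure \(\sigma_w\) with \(T_*^{n+t(w)}\sigma_w\) regular, and splitting it produces a ``finished'' measure \(\rho_w\) with \(T_*^{n+t(w)}\rho_w = \mu\), descendant measures \(\sigma_{wb}\), and weights \(r_{w,b}\) obeying \(e^{-R}(1-\xi)\mu(b) \le r_{w,b} \le e^R(1-\xi)\mu(b)\). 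I then set \(q_\emptyset = 1\), \(q_{wb} = q_w r_{w,b}\), and \(\bP(w) = \xi q_w\).

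The next step is to show that this assembles into the claimed disintegration. Because each \(\sigma_w\) is a probability measure and \(\sigma_w = \xi\rho_w + \sum_b r_{w,b}\sigma_{wb}\) with all pieces probability measures, mass conservation forces \(\sum_{b\in\alpha}r_{w,b} = 1-\xi\) for every \(w\); telescoping then gives \(\sum_{\ell(w)=k}q_w = (1-\xi)^k\). Expanding \(\rho\) by substituting the splitting of each remainder \(k\) times yields \(\rho = \xi\sum_{\ell(w)<k}q_w\rho_w + \sum_{\ell(w)=k}q_w\sigma_w\); the last (``leftover'') term is a nonnegative measure of total mass \((1-\xi)^k\), which tends to \(0\). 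Passing to the limit gives \(\rho = \xi\sum_{w\in\cA}q_w\rho_w = \sum_{w\in\cA}\bP(w)\rho_w\), with each \(T_*^{n+t(w)}\rho_w = \mu\) as required, and \(\sum_w\bP(w) = \xi\sum_{k\ge 0}(1-\xi)^k = 1\), so \(\bP\) is a probability measure on \(\cA\).

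Finally I turn to the estimates~\eqref{eq:ajiw}. Using \(\sum_b r_{w,b} = 1-\xi\) once more, summing \(\bP\) over all words extending a fixed \(w\) is a geometric series \(\bP([w]) = \xi q_w\sum_{j\ge 0}(1-\xi)^j = q_w\). Hence \(\bP(\ell = k) = \sum_{\ell(w)=k}\bP(w) = \xi(1-\xi)^k\), and \(\bP([a_1\cdots a_{k+1}]\mid[a_1\cdots a_k]) = q_{a_1\cdots a_{k+1}}/q_{a_1\cdots a_k} = r_{a_1\cdots a_k,\,a_{k+1}}\), so the two-sided bound on the \(r\)'s from Proposition~\ref{prop:vaoe} is exactly the claimed estimate. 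The main points to get right are the bookkeeping of the weights along the tree and the two summation arguments: that the leftover mass \((1-\xi)^k\) vanishes, so the infinite decomposition genuinely reconstructs \(\rho\), and the geometric-series identity \(\bP([w]) = q_w\) that makes the conditional probabilities collapse to the single-step weights \(r_{w,b}\); measurability of the family \(\{\rho_w\}\) is inherited from Proposition~\ref{prop:vaoe} at each finite stage.
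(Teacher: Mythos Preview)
Your proof is correct and follows the same approach as the paper: iterate Proposition~\ref{prop:vaoe} along the tree \(\cA\), set \(\bP(a_1\cdots a_n)=\xi\, r_{a_1}r_{a_1a_2}\cdots r_{a_1\cdots a_n}\), and read off the disintegration and the conditional-probability bounds. The only difference is that you spell out in detail what the paper dismisses as ``immediate'' --- the mass-conservation identity \(\sum_b r_{w,b}=1-\xi\), the vanishing of the leftover mass \((1-\xi)^k\), and the geometric-series computation \(\bP([w])=q_w\) --- so your write-up is, if anything, more complete.
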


\begin{proof}
  Write \(\rho = \xi \rho' + \sum_{x \in \alpha} r_x \rho_x\) as in
  Proposition~\ref{prop:vaoe}. Then for each \(x \in \alpha\) apply
  Proposition~\ref{prop:vaoe} again and write
  \(\rho_x = \xi \rho_x' + \sum_{y \in \alpha} r_{xy} \rho_{xy} \).
  Apply the same to each \(\rho_{xy}\) and so on.
  Then
  \[
    \rho = \xi \rho' + \sum_{x \in \alpha} r_x \xi \rho_x'
    + \sum_{x,y \in \alpha} r_x r_{xy} \xi \rho_{xy}' + \cdots
  \]
  This is a disintegration as in \eqref{eq:atyi} with
  \(\bP(a) = r_{a_1} r_{a_1 a_2} \cdots r_{a_1 a_2 \cdots a_n} \xi\)
  for \(a = a_1 \cdots a_n \in \cA\).
  Conditions~\eqref{eq:ajiw} are immediate.
\end{proof}

\subsection{Polynomial and exponential tails}
\label{sec:tails}

Let \(\rho\) be a measure as in Lemma~\ref{lemma:b87e}
and \(\bP\) be the corresponding measure on \(\cA\).
Recall that \(t \colon \cA \to \bZ\) is the word length.

In this subsection we obtain elementary estimates of moments of \(t\)
in situations when \(\int_Y \tau^p \, d\mu < \infty\)
for some \(p > 1\), or
\(\int_Y e^{\gamma \tau} \, d\mu < \infty\) for some 
\(\gamma > 0\).

For \(n \geq 1\), let \(\cA_n\) be the subset of \(\cA\) of 
all words of length \(n\). 
By Lemma~\ref{lemma:b87e}, \(\bP(\cA_n) = (1-\xi)^n \xi\).
Let \(\bP_n\) denote the conditional probability measure on \(\cA_n\).

Elements of \(\cA_n\) have the form \(a = a_1 \cdots a_n\),
and \(a_1, \ldots, a_n\) can be considered as random variables
with values in \(\alpha\), and \(t = \tau(a_1) + \cdots + \tau(a_n)\).

It follows from Lemma~\ref{lemma:b87e} that
for all \(k \leq n\) and \(x \in \alpha\),
\begin{equation}
  \label{eq:lgg}
  \bP_n(a_k = x \mid a_1, \ldots, a_{k-1}) 
  \leq e^R \mu(x)
\end{equation}

\subsubsection{Polynomial tails}

\begin{proposition}
  \label{prop:fwqjq}
  Suppose that there exist \(C_\tau > 0\) and \(\beta > 1\) such that
  \(m(\tau \geq \ell) \leq C_\tau \ell^{-\beta}\) for \(\ell \geq 1\).
  Then \(\bP(t \geq \ell) \leq C \ell^{-\beta}\), where the constant
  \(C > 0\) depends continuously on \(R\), \(\xi\) and \(C_\tau\).
\end{proposition}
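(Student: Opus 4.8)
The plan is to play the heavy polynomial tail of a single return time against the light (geometric) tail of the word length. Throughout I would condition on the length: by Lemma~\ref{lemma:b87e} one has \(\bP(\cA_n) = (1-\xi)^n \xi\), and on \(\cA_n\) the quantity \(t\) is the sum \(\tau(a_1) + \cdots + \tau(a_n)\) of the \(n\) return times.

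First I would record a uniform per-letter tail bound. Summing \eqref{eq:lgg} over all letters \(x\) with \(\tau(x) \geq u\) and then discarding the conditioning on \(a_1, \ldots, a_{k-1}\) gives, for every \(n\), every \(k \leq n\) and every real \(u \geq 1\),
\[
  \bP_n\bigl(\tau(a_k) \geq u\bigr) \leq e^R \mu(\tau \geq u) \leq e^R K\, m(\tau \geq u) \leq e^R K C_\tau\, u^{-\beta},
\]
where the middle inequality is the density bound \(d\mu/dm \leq K\) from \eqref{eq:antt}. Next, for a length \(n \leq \ell\) I would use the pigeonhole containment \(\{t \geq \ell\} \subseteq \{\max_{k \leq n} \tau(a_k) \geq \ell/n\}\) (were every summand below \(\ell/n\), the sum would be below \(\ell\)), so that a union bound and the per-letter estimate at \(u = \ell/n \geq 1\) yield
\[
  \bP_n(t \geq \ell) \leq \sum_{k=1}^n \bP_n\bigl(\tau(a_k) \geq \ell/n\bigr) \leq n \cdot e^R K C_\tau (\ell/n)^{-\beta} = e^R K C_\tau\, n^{\beta+1} \ell^{-\beta}.
\]
After enlarging \(C_\tau\) so that \(C_\tau \geq 1\) (which only weakens the hypothesis, while \(K \geq 1\) and \(e^R \geq 1\) always), this same bound holds trivially for the remaining lengths \(n > \ell\), since its right-hand side then exceeds \(1\).

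Summing over lengths would finish the argument:
\[
  \bP(t \geq \ell) = \sum_{n \geq 1} \bP(\cA_n)\, \bP_n(t \geq \ell) \leq e^R K C_\tau\, \ell^{-\beta}\, \xi \sum_{n \geq 1} (1-\xi)^n n^{\beta+1},
\]
and the series on the right converges because the geometric factor \((1-\xi)^n\) dominates the polynomial \(n^{\beta+1}\). This produces the desired \(C \ell^{-\beta}\) with \(C = e^R K C_\tau\, \xi \sum_{n\geq 1}(1-\xi)^n n^{\beta+1}\), depending continuously on \(R\), \(\xi\), \(C_\tau\) (and on the fixed parameters \(\beta\), \(K\)).

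The one point that needs care, and which I would flag as the main trap rather than a genuine obstacle, is the factor \(e^R\) attached to each letter in \eqref{eq:lgg}. It is harmless here only because the letters enter through a union bound — a single \(e^R\) summed over the \(n\) positions — and not through a product. Had I instead tried to bound an exponential moment of \(t\) by factorizing over letters, I would accumulate \((e^R)^n\) and then need \(\sum_n (1-\xi)^n (e^R)^n\) to converge, i.e.\ \((1-\xi)e^R < 1\), which can fail for the admissible range of \(R\) and \(\xi\). The union-bound-plus-geometric-length scheme avoids this completely, reducing the whole estimate to the elementary convergence of \(\sum_n (1-\xi)^n n^{\beta+1}\).
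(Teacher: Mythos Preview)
Your proof is correct and follows essentially the same route as the paper's: condition on the word length \(n\), use a pigeonhole/union bound to reduce \(\{t\geq\ell\}\) to a single-letter tail, then sum against the geometric distribution \(\bP(\cA_n)=(1-\xi)^n\xi\) to obtain \(\sum_n (1-\xi)^n n^{\beta+1}<\infty\). You are in fact slightly more careful than the paper, inserting the factor \(K\) to pass from \(\mu\) to \(m\) (the paper writes \(e^R m(\tau\geq\ell)\) directly from \eqref{eq:lgg}, where strictly speaking \eqref{eq:lgg} yields \(\mu\)), and explicitly handling the range \(n>\ell\); your closing remark about why a product over letters would be dangerous is a correct but extra observation.
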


\begin{proof}
  Let \(k \leq n\), and \(a = a_1 \cdots a_n \in \cA_n\). 
  By \eqref{eq:lgg},
  \[
    \bP_n(\tau(a_k) \geq \ell) 
    \leq e^R m(\tau \geq \ell)
    \leq C_\tau e^R \ell^{-\beta}
    .
  \]
  Next,
  \[
    \bP_n( t \geq \ell) 
    \leq \sum_{k=1}^n \bP_n(\tau(a_k) \geq \ell/n)
    \leq n C_\tau e^R (\ell/n)^{-\beta}
    .
  \]
  Finally,
  \[
    \bP(t \geq \ell)
    = \sum_{n=1}^\infty \bP(\cA_n) \bP_n(t \geq \ell)
    \leq C_\tau e^R \xi \ell^{-\beta} \sum_{n=1}^\infty (1-\xi)^n n^{1+\beta}
    .
  \]
\end{proof}

\begin{proposition}
  \label{prop:qjqwf}
  Suppose that there exist \(C_\tau > 0\) and \(\beta > 1\) such that
  \(\int \tau^\beta \, dm \leq C_\tau\).
  Then \(\int t^\beta \, d\bP \leq C\), where the constant
  \(C > 0\) depends continuously on \(R\), \(\xi\) and \(C_\tau\).
\end{proposition}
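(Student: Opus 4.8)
The plan is to follow the template of Proposition~\ref{prop:fwqjq}: decompose over the word length and control each letter separately, exploiting that $x \mapsto x^\beta$ is convex for $\beta > 1$. Using $\bP(\cA_n) = (1-\xi)^n \xi$ from Lemma~\ref{lemma:b87e}, write
\[
  \int t^\beta \, d\bP
  = \sum_{n=1}^\infty \bP(\cA_n) \int t^\beta \, d\bP_n
  = \xi \sum_{n=1}^\infty (1-\xi)^n \int t^\beta \, d\bP_n.
\]
It then suffices to show $\int t^\beta \, d\bP_n \leq C' n^\beta$ with $C'$ independent of $n$, since the geometric weight $(1-\xi)^n$ makes $\sum_{n=1}^\infty (1-\xi)^n n^\beta$ converge (as $1 - \xi < 1$), yielding the required constant $C$.

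To estimate $\int t^\beta \, d\bP_n$, first I would remove the dependence among the letters $a_1, \ldots, a_n$. On $\cA_n$ we have $t = \sum_{k=1}^n \tau(a_k)$, so by convexity of $x \mapsto x^\beta$ (equivalently, the power-mean inequality),
\[
  t^\beta
  = \Bigl( \sum_{k=1}^n \tau(a_k) \Bigr)^\beta
  \leq n^{\beta - 1} \sum_{k=1}^n \tau(a_k)^\beta
  \qquad \text{pointwise on } \cA_n.
\]
This is the main idea: it reduces the whole estimate to single-letter moments and sidesteps the fact that the $a_k$ are not independent --- the only probabilistic input we need is the uniform per-letter domination~\eqref{eq:lgg}.

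For each single letter, marginalizing~\eqref{eq:lgg} gives $\bP_n(a_k = x) \leq e^R \mu(x)$ for all $x \in \alpha$, whence, exactly as in Proposition~\ref{prop:fwqjq},
\[
  \int \tau(a_k)^\beta \, d\bP_n
  = \sum_{x \in \alpha} \tau(x)^\beta \, \bP_n(a_k = x)
  \leq e^R \int_Y \tau^\beta \, d\mu,
\]
and $\int_Y \tau^\beta \, d\mu$ is comparable to $\int_Y \tau^\beta \, dm \leq C_\tau$ by~\eqref{eq:antt}. Summing over $k$ gives $\int t^\beta \, d\bP_n \leq n^\beta e^R \int_Y \tau^\beta \, d\mu$, which is the bound $C' n^\beta$ sought above.

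Substituting back completes the proof, with $C$ depending continuously on $R$, $\xi$ and $C_\tau$. I do not anticipate any genuine obstacle: the convexity split is what makes the absence of independence harmless, and every remaining step is the single-letter estimate already used for the tail version in Proposition~\ref{prop:fwqjq}.
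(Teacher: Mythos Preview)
Your proof is correct and mirrors the paper's own argument: decompose over word length, apply the convexity bound $\bigl(\sum_k \tau(a_k)\bigr)^\beta \leq n^{\beta-1}\sum_k \tau(a_k)^\beta$, control each letter via~\eqref{eq:lgg}, and sum the resulting geometric series. The only cosmetic difference is that you explicitly pass from $\mu$ to $m$ via~\eqref{eq:antt}, which the paper leaves implicit.
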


\begin{proof}
  Let \(k \leq n\), and \(a = a_1 \cdots a_n \in \cA_n\). 
  By \eqref{eq:lgg},
  \[
    \int \tau^\beta(a_k) \, d\bP_n
    \leq e^R \int \tau^\beta \, dm
    \leq C_\tau e^R
    .
  \]
  Next,
  \[
    t^\beta (a) 
    = (\tau(a_1) + \cdots + \tau(a_n))^\beta
    \leq n^{\beta-1} (\tau^\beta(a_1) + \cdots + \tau^\beta(a_n)),
  \]
  thus
  \[
    \int t^\beta \, d\bP_n
    \leq n^{\beta-1} C_\tau e^R
    .
  \]
  Finally,
  \[
    \int t^\beta \, d\bP
    = \sum_{n=1}^{\infty} \bP(\cA_n)  \int t^\beta \, d\bP_n
    \leq C_\tau e^R \xi \sum_{n=1}^\infty (1-\xi)^n n^{\beta-1} 
    .
  \]
\end{proof}

\subsubsection{(Stretched) exponential tails}

\begin{proposition}
  \label{prop:aexp}
  Let \(X_1, \ldots, X_n\) be nonnegative random variables.
  Suppose that there exist \(\alpha>0\), $\gamma\in(0,1]$, such that
  \[
    \bP( X_k \geq \ell\,|\,X_1=x_1,\dots,X_{k-1}=x_{k-1}) 
    \leq C e^{-\alpha \ell^\gamma}
  \]
  for all \(\ell \geq 0\), $1\le k\le n$ and $x_1,\dots,x_{k-1}\ge0$.
  Then for all $A \in (0,\alpha/2]$, $\ell\ge0$,
  \[
    \bP(X_1 + \cdots + X_n \geq \ell)
    \leq (1+A C_1)^n e^{-A \ell^\gamma},
  \]
  where \(C_1\) depends continuously 
  on $C$, \(\gamma\) and \(\alpha\).
\end{proposition}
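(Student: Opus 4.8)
The plan is to run a Chernoff-type argument adapted to the stretched-exponential regime, exploiting that \(x \mapsto x^\gamma\) is subadditive for \(\gamma \in (0,1]\). Writing \(S_n = X_1 + \cdots + X_n\), I would first note that since \(t \mapsto t^\gamma\) is concave on \([0,\infty)\) and vanishes at \(0\), it is subadditive, so \(S_n^\gamma \leq \sum_{k=1}^n X_k^\gamma\) and hence \(e^{A S_n^\gamma} \leq \prod_{k=1}^n e^{A X_k^\gamma}\). Applying Markov's inequality to the event \(\{S_n \geq \ell\} = \{S_n^\gamma \geq \ell^\gamma\}\) then gives
\[
  \bP(S_n \geq \ell) \leq e^{-A \ell^\gamma}\, \bE\Bigl[\prod_{k=1}^n e^{A X_k^\gamma}\Bigr],
\]
and it remains to bound the expectation on the right by \((1 + A C_1)^n\).

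For that I would control the conditional moments \(\bE[e^{A X_k^\gamma} \mid X_1, \ldots, X_{k-1}]\) uniformly. Using the layer-cake identity \(\bE[f(X)] = f(0) + \int_0^\infty f'(\ell)\, \bP(X > \ell)\, d\ell\) with \(f(\ell) = e^{A \ell^\gamma}\), together with the hypothesis \(\bP(X_k > \ell \mid X_1,\dots,X_{k-1}) \leq C e^{-\alpha \ell^\gamma}\), I obtain
\[
  \bE[e^{A X_k^\gamma} \mid X_1, \ldots, X_{k-1}]
  \leq 1 + C \int_0^\infty A\gamma\, \ell^{\gamma-1} e^{(A-\alpha)\ell^\gamma}\, d\ell.
\]
The substitution \(u = \ell^\gamma\) turns the integral into \(A \int_0^\infty e^{(A-\alpha)u}\,du = A/(\alpha - A)\), which converges precisely because \(A \leq \alpha/2 < \alpha\); this is the one place where the restriction \(A \in (0, \alpha/2]\) is used, and it also yields \(A/(\alpha-A) \leq 2A/\alpha\). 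Thus the conditional moment is at most \(1 + A C_1\) with \(C_1 = 2C/\alpha\), which is continuous in \(C\) and \(\alpha\) (and trivially in \(\gamma\), since \(\gamma\) cancels in the substitution).

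Finally, I would iterate this bound via the tower property: conditioning on \(X_1, \ldots, X_{n-1}\), pulling out the \(\mathcal{F}_{n-1}\)-measurable factors \(e^{A X_k^\gamma}\) for \(k < n\), applying the uniform conditional estimate to the last factor, and repeating, one gets \(\bE[\prod_{k=1}^n e^{A X_k^\gamma}] \leq (1 + A C_1)^n\). Combined with the Markov bound above, this is exactly the claimed inequality. I expect no serious obstacle here: the only point needing a little care is the subadditivity of \(x \mapsto x^\gamma\), which is what makes the exponential of the sum factor over the summands and is the whole reason a clean product bound comes out rather than something weaker; once that is in place, the convergence of the moment integral and the iteration are routine.
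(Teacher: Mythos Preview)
Your argument is correct. The subadditivity of \(t\mapsto t^\gamma\), the Markov/Chernoff step, the layer-cake bound on the conditional moment (with the substitution \(u=\ell^\gamma\) giving \(CA/(\alpha-A)\le 2CA/\alpha\) for \(A\le\alpha/2\)), and the tower-property iteration all go through as written.

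There is nothing to compare against in the paper itself: the author does not prove this proposition here but simply refers to \cite[Proposition~4.11]{KKM16}. Your proof is a self-contained version of the standard Chernoff-type argument for stretched-exponential tails, and is presumably close in spirit to what is done in the cited reference; in any case it fully establishes the proposition with the explicit constant \(C_1 = 2C/\alpha\).
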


\begin{proof}
  See \cite[Proposition 4.11]{KKM16}.
\end{proof}

\begin{proposition}
  \label{prop:afsame}
  Suppose that there exist \(C_\tau > 0\), \(\alpha > 0\) and
  \(\gamma \in (0,1]\) such that
  \(m(\tau \geq \ell) \leq C_\tau e^{-\alpha \ell^{\gamma}}\)
  for \(\ell \geq 1\).
  Then \(\bP(t \geq \ell) \leq C e^{ - A \ell^{\gamma}}\),
  where the constants \(C > 0\) and \(A \in (0, \alpha)\)
  depend continuously on \(R\), \(\xi\), \(C_\tau\), \(\alpha\)
  and \(\gamma\).
\end{proposition}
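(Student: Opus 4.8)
The plan is to follow the same level-by-level scheme used for the polynomial tails in Propositions~\ref{prop:fwqjq} and~\ref{prop:qjqwf}, but to replace the crude union bound on the sum \(t = \tau(a_1) + \cdots + \tau(a_n)\) by the conditional stretched-exponential deviation bound of Proposition~\ref{prop:aexp}. Fix \(n \geq 1\), write \(a = a_1 \cdots a_n \in \cA_n\), and view \(X_k = \tau(a_k)\) as nonnegative random variables on the conditional probability space \((\cA_n, \bP_n)\), so that \(t = X_1 + \cdots + X_n\).

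First I would check that \(X_1, \ldots, X_n\) satisfy the hypothesis of Proposition~\ref{prop:aexp}. Summing \eqref{eq:lgg} over \(\{x \in \alpha : \tau(x) \geq \ell\}\) gives
\[
  \bP_n(X_k \geq \ell \mid a_1, \ldots, a_{k-1}) \leq e^R \mu(\tau \geq \ell),
\]
and, exactly as in the polynomial case, the two-sided density bound \eqref{eq:antt} together with the hypothesis \(m(\tau \geq \ell) \leq C_\tau e^{-\alpha \ell^\gamma}\) bounds the right-hand side by a constant multiple of \(C_\tau e^{-\alpha \ell^\gamma}\). Since conditioning on \(\tau(a_1), \ldots, \tau(a_{k-1})\) is coarser than conditioning on \(a_1, \ldots, a_{k-1}\), the same bound survives under the coarser conditioning by averaging, so the assumption of Proposition~\ref{prop:aexp} holds. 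Applying it, for every \(A \in (0, \alpha/2]\),
\[
  \bP_n(t \geq \ell) \leq (1 + A C_1)^n e^{-A \ell^\gamma},
\]
where \(C_1\) depends continuously on \(R\), \(C_\tau\), \(\alpha\) and \(\gamma\).

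It then remains to sum over levels using \(\bP(\cA_n) = (1-\xi)^n \xi\):
\[
  \bP(t \geq \ell) = \sum_{n=1}^\infty \bP(\cA_n) \bP_n(t \geq \ell)
  \leq \xi e^{-A \ell^\gamma} \sum_{n=1}^\infty \bigl[(1-\xi)(1 + A C_1)\bigr]^n .
\]
Here is the only real obstacle: the factor \((1+AC_1)^n\) produced by Proposition~\ref{prop:aexp} grows geometrically in \(n\), so this series converges only when \((1-\xi)(1 + A C_1) < 1\). I would resolve this by taking \(A\) small, for instance \(A = \min\{\alpha/2,\ \xi/(2(1-\xi)C_1)\}\), which forces \((1-\xi)(1 + A C_1) \leq 1 - \xi/2 < 1\) while keeping \(A \in (0, \alpha/2] \subset (0, \alpha)\). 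The geometric series is then at most \(2/\xi\), yielding \(\bP(t \geq \ell) \leq 2\, e^{-A \ell^\gamma}\), with \(A\) and the leading constant depending continuously only on \(R\), \(\xi\), \(C_\tau\), \(\alpha\) and \(\gamma\), as claimed. Everything besides this balancing of per-level growth against the geometric decay of \(\bP(\cA_n)\) is a direct transcription of the polynomial-tail proofs.
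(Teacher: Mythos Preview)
Your proposal is correct and follows essentially the same approach as the paper's proof: verify the conditional tail hypothesis of Proposition~\ref{prop:aexp} via \eqref{eq:lgg}, apply it on each level $\cA_n$, then choose $A$ small enough that the per-level factor $(1+AC_1)^n$ is beaten by the geometric weight $(1-\xi)^n$. You are in fact a bit more explicit than the paper in two places---spelling out the passage from $\mu(\tau\ge\ell)$ to $m(\tau\ge\ell)$ via \eqref{eq:antt}, and giving a concrete choice of $A$---but the argument is the same.
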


\begin{proof}
  Let \(k \leq n\), and \(a = a_1 \cdots a_n \in \cA_n\).
  By \eqref{eq:lgg},
  \[
    \bP_n(\tau(a_k) \geq \ell \mid a_1, \ldots, a_{k-1})
    \leq e^R m(\tau \geq \ell)
    \leq C_\tau e^{-\alpha \ell^\gamma}
    .
  \]
  By Proposition~\ref{prop:aexp},
  \[
    \bP_n(t \geq \ell)
    \leq (1+A C_1)^n e^{-A \ell^\gamma}
  \]
  for all \(A \in (0, \alpha/2)\).
  Taking \(A\) small enough, we obtain  
  \[
    \bP(t \geq \ell)
    = \sum_{n=1}^{\infty} \bP(\cA_n) \bP_n(t \geq \ell)
    \leq \xi e^{-A \ell^\gamma} \sum_{n=1}^{\infty} (1-\xi)^n (1+A C_1)^n
    = C e^{-A \ell^\gamma}
  \]
  with \(C < \infty\).
\end{proof}

\subsection{Coupling}

Recall that \(s \colon M \times M \to \bN_0 \cup \{\infty\}\) is defined by
\[
  s(x,y) = 
  \inf \bigl\{\max\{k, n\} \colon k, n \geq 0, \, T^k x = T^n y \bigr\}.
\]

\begin{lemma}
  \label{lemma:abb}
  Let \(n \geq 0\) and \(\rho\) be a probability measure on
  \(M\) such that \(T_*^n \rho\) is regular.
  Then there exists a measure \(\hrho\) on \(M \times M\)
  with marginals \(\rho\) and \(\mu\) on the first and second
  coordinates respectively, such that \(s(x,y) < \infty\)
  for \(\hrho\)-almost every \((x, y) \in M \times M\).
  
  If there exist \(C_\tau > 0\) and \(\beta > 1\) such that
  \(m(\tau \geq \ell) \leq C_\tau \ell^{-\beta}\) for \(\ell \geq 1\),
  then 
  \(
    \hrho (s \geq \ell) \leq C (\ell-n)^{-\beta}
  \)
  for \(\ell \geq n+1\) and some constant \(C>0\).
  
  If there exist constants \(C_\tau > 0\), \(\alpha > 0\) and
  \(\gamma \in (0,1]\) such that
  \(m(\tau \geq \ell) \leq C_\tau e^{-\alpha \ell^{\gamma}}\)
  for \(\ell \geq 1\), then
  \(
    \hrho (s \geq \ell) \leq C e^{-A (\ell-n)^\gamma}
  \)
  for \(\ell \geq n + 1\) and some constants \(A \in (0, \alpha)\)
  and \(C > 0\).

  In both cases above, the constants \(C\) and \(A\) depend continuously 
  (only) on \(R\), \(\xi\), \(C_\tau\), \(\beta\),
  \(\alpha\) and \(\gamma\).
\end{lemma}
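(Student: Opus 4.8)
The plan is to combine the disintegration from Lemma~\ref{lemma:b87e} with the elementary ``diagonal'' coupling sketched in items~\ref{thm:yeop:diss}--\ref{thm:yeop:dis} of the outline. Since $T_*^n\rho$ is regular, Lemma~\ref{lemma:b87e} provides a probability measure $\bP$ on $\cA$ and a disintegration $\rho = \sum_{a \in \cA} \bP(a)\rho_a$ with $T_*^{n+t(a)}\rho_a = \mu$ for every $a$. Each $\rho_a$ has been pushed, after exactly $n+t(a)$ steps, onto the target measure $\mu$, which is what lets us couple $\rho_a$ with $\mu$ along orbits with no further error.

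First I would treat a single $\rho_a$. Writing $U_m\colon M \to M \times M$, $U_m(x) = (x, T^m x)$, set $\hrho_a = (U_{n+t(a)})_* \rho_a$. Its first marginal is $\rho_a$ by construction, and its second marginal is $T_*^{n+t(a)}\rho_a = \mu$. Crucially, any point in the support of $\hrho_a$ has the form $(x, T^{n+t(a)}x)$, and $T^{n+t(a)}x = T^0(T^{n+t(a)}x)$, so by the definition~\eqref{eq:ffo} of $s$ we get $s(x, T^{n+t(a)}x) \leq n+t(a)$; thus $\hrho_a$ is supported on $\{s \leq n+t(a)\}$.

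I then assemble $\hrho = \sum_{a \in \cA}\bP(a)\hrho_a$. Summing marginals gives $\rho$ on the first coordinate and $\sum_{a \in \cA} \bP(a)\mu = \mu$ on the second, as required; and since each $\hrho_a$ is supported where $s$ is finite, $s < \infty$ holds $\hrho$-almost everywhere. For the tails, observe that $\hrho_a(s \geq \ell) = 0$ whenever $n+t(a) < \ell$, so
\[
  \hrho(s \geq \ell)
  = \sum_{a \in \cA}\bP(a)\,\hrho_a(s \geq \ell)
  \leq \sum_{a \colon t(a) \geq \ell - n}\bP(a)
  = \bP(t \geq \ell - n)
\]
for $\ell \geq n+1$. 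Feeding the hypothesis $m(\tau \geq \ell) \leq C_\tau \ell^{-\beta}$ into Proposition~\ref{prop:fwqjq} bounds $\bP(t \geq \ell-n) \leq C(\ell-n)^{-\beta}$, while the stretched-exponential hypothesis together with Proposition~\ref{prop:afsame} gives $\bP(t \geq \ell-n) \leq C e^{-A(\ell-n)^\gamma}$; these are exactly the claimed estimates, with the stated dependence of the constants.

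Since all the analytic work---the distortion control behind the disintegration and the moment estimates for $t$---is already packaged in the cited results, there is no serious obstacle. The only point needing care is the bookkeeping of the time shift: the coupling matches orbits only after $n+t(a)$ steps, which is why the tail of $s$ is governed by $\bP(t \geq \ell-n)$ rather than $\bP(t \geq \ell)$, and hence why $(\ell-n)$ appears in the bounds. One should also note that the family $\{\hrho_a\}$ is measurable and the sum converges, but this is immediate since $\cA$ is countable and $\bP$ is a probability measure.
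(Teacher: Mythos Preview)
Your proposal is correct and follows essentially the same argument as the paper: both define $\hrho = \sum_{a \in \cA}\bP(a)\,(U_{n+t(a)})_*\rho_a$ from the disintegration of Lemma~\ref{lemma:b87e}, check the marginals, observe that $s \leq n+t(a)$ on the support of each piece, and reduce the tail bounds to $\bP(t \geq \ell-n)$ via Propositions~\ref{prop:fwqjq} and~\ref{prop:afsame}. Your write-up is slightly more detailed in verifying the marginals and the support condition, but the route is identical.
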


\begin{proof}
  Lemma~\ref{lemma:b87e} provides us with the
  decomposition 
  \(\rho = \sum_{a \in \cA} \bP(a) \rho_a\)
  such that
  \(T_*^{n+t(a)} \rho_a = \mu\) for every \(a\).
  
  For \(k \geq 0\) define \(U_k \colon M \to M \times M\),
  \(U_k(x) = (x, T^k x)\).
  Define
  \[
    \hrho = \sum_{a \in \cA} \bP(a) \, (U_{n+t(a)})_* \rho_a.
  \]
  It is clear that the marginals of \((U_{n+t(a)})_* \rho_a\)
  on the first and second components are \(\rho_a\) and \(\mu\)
  respectively. Therefore the marginals of \(\hrho\) are
  \(\rho\) and \(\mu\).

  Observe that \(s(x,y) \leq n+ t(a)\) for 
  \((U_{n+t(a)})_* \rho_a\)-almost every \((x,y) \in M \times M\).
  Thus \(s < \infty\) for \(\hrho\)-almost every \((x,y) \in M \times M\).
  
  It remains to estimate \(\hrho(s \geq \ell)\). Note that
  \(\hrho(s \geq \ell) \leq \bP (t \geq \ell - n)\).
  The results follow directly from
  Propositions~\ref{prop:fwqjq} and~\ref{prop:afsame}.
\end{proof}

\subsection{Proof of Theorem~\ref{thm:yeop}}
\label{proof:thm:yeop}

  By Lemma~\ref{lemma:abb}, for every \(z \in E\) there exists
  a probability measure \(\hrho_z\) on \(M \times M\) with marginals
  \(\rho_z\) and \(\mu\) respectively such that \(s < \infty\)
  almost surely.

  \begin{remark}
    \label{rmk:mf}
    In Proposition~\ref{prop:vaoe} and Lemma~\ref{lemma:b87e},
    we construct the measures \(\rho_a\), \(a \in \cA\)
    (as in Lemma~\ref{lemma:b87e}) by explicit formulas,
    and it is a straightforward verification that,
    as long as \(\rho_z\) is a measurable family, so are the respective
    \(\rho_{z,a}\) for each \(a \in \cA\). Further, \(\hrho_z\) are explicitly
    constructed from \(\rho_{z,a}\) in Lemma~\ref{lemma:abb}, so the
    family \(\hrho_z\) is measurable.
  \end{remark}
  
  Define
  \(
    \hrho = \int_E \hrho_z \, d\varkappa(z)
  \).
  Then the marginals of \(\hrho\) are
  \(\rho\) and \(\mu\) respectively, and \(s < \infty\) 
  almost surely with respect to \(\hrho\).

  It remains to estimate the tails \(\hrho(s \geq n)\).
  We prove the weak polynomial case, the others are similar.
  Using Lemma~\ref{lemma:abb}, write
  \begin{align*}
    \hrho(s \geq n)
    &= \int_E \hrho_z (s \geq n) \, d\varkappa(z)
    \ll \int_E \min \{1, (n - r(z))^{-\beta}\} \, d\varkappa(z)
    \\ & \leq \varkappa(r \geq n/2) + \int_E (n/2)^{-\beta} \, d\varkappa(z)
    \ll n^{-\beta}
    .
  \end{align*}

\subsection{Proof of Theorem~\ref{thm:yeoc}}
  Assume without loss that \(|v|_\infty \leq 1/2\).
  
  Let \(U_n = (v_n, \mu)\). It follows from Theorem~\ref{thm:yeop}
  that the processes \(\{X_n, n \geq 0\}\) and \(\{U_n, n \geq 0\}\) 
  can be redefined on the probability space
  \((M \times M, \hrho_{XU})\) where \(s<\infty\) \(\hrho_{XU}\)-almost surely.
  By Remark~\ref{rmk:aaggg}, \(Z_{XU} = \sup_{n}|X_n - U_n| \leq s\),
  thus \(Z_{XU}\) is also finite \(\hrho_{XU}\)-almost surely.
  
  Similarly, \(\{Y_n, n \geq 0\}\) and \(\{U_n, n \geq 0\}\) 
  can be redefined on \((M \times M, \hrho_{YU})\) 
  with \(\hrho_{YU}\)-almost surely finite \(Z_{YU} = \sup_{n}|Y_n - U_n|\).
  
  By Lemma~\ref{lemma:joc}, all three processes 
  \(\{X_n, n \geq 0\}\), \(\{Y_n, n \geq 0\}\) and 
  \(\{U_n, n \geq 0\}\) can be redefined on the same probability space 
  \((\Omega, \bP)\) so that the joint distributions of pairs 
  \(\{(X_n, U_n), n \geq 0\}\) and \(\{(Y_n, U_n), n \geq 0\}\) are as above. 
  Further we work on this probability space.
  
  Observe that \(Z = \sup_{n} |X_n -Y_n| \leq Z_{XU} + Z_{YU}\).
  It follows that \(Z\) is almost surely finite.
  
  It remains to estimate \(\bP(Z \geq x)\) for \(x \geq 0\). The bounds
  follow transparently from Theorem~\ref{thm:yeop} and the relation
  \begin{align*}
    \bP(Z \geq x) 
    & \leq \bP(Z_{XU} \geq x/2) + \bP(Z_{YU} \geq x/2)
    \\ & \leq \hrho_{XU}(s \geq x/2) + \hrho_{YU}(s \geq x/2)
    .
  \end{align*}

\appendix

\section{Joining of couplings}
\label{app:joc}

Suppose that \(X_j\), \(j=1,2,3\) are random variables 
on probability spaces \(\Omega_j\) with values in 
some measurable spaces \(R_j\).

Assume that \(X_1\) and \(X_2\) can be redefined on
a new probability space \(\Omega_{12}\),
so that the joint distribution of \((X_1, X_2)\) has some
useful property, for example that \(|X_1 - X_2| < 1\)
almost surely.

Assume similarly that \(X_2\) and \(X_3\) can be redefined on
a probability space \(\Omega_{23}\) with a joint distribution
of \((X_2,X_3)\) of interest.

Recall that a \emph{Polish space} is a separable completely
metrizable topological space. In this paper we work with
continuous and discrete time random processes, which 
can be viewed as random variables with values in
the space of c\`adl\`ag functions on \([0,\infty)\), 
or \(\bR^\bN\). These spaces are Polish.

Polish spaces are \emph{universally measurable} (see
\cite{S84} for the definition and discussion). This is a technical
but useful property, which allows to \emph{join couplings:}

\begin{lemma}
  \label{lemma:joc}
  If all value spaces \(R_j\) are universally measurable, then
  \(X_1\), \(X_2\) and \(X_3\) can be redefined on the same
  probability space \(\Omega_{123}\), such that
  the distributions of \((X_1,X_2)\) and \((X_2,X_3)\)
  are the same as on \(\Omega_{12}\) and \(\Omega_{23}\)
  respectively.
\end{lemma}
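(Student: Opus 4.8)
The plan is to glue the two given couplings along their shared \(X_2\)-marginal, using regular conditional distributions (disintegration). Write \(\nu\) for the law of \(X_2\) on \(R_2\); by hypothesis this is the same measure whether \(X_2\) is computed on \(\Omega_{12}\) or on \(\Omega_{23}\), since in both couplings \(X_2\) keeps its original distribution. The idea is that once we can condition on the value of \(X_2\), each coupling prescribes a conditional law for its ``outer'' variable given \(X_2\), and we simply declare \(X_1\) and \(X_3\) to be conditionally independent given \(X_2\).

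First I would disintegrate each coupling over \(X_2\). Because the value spaces are Polish (e.g.\ \(\bR^\bN\) or the space of c\`adl\`ag paths), regular conditional distributions exist and are measurable in the conditioning variable: there are probability kernels \(x_2 \mapsto \kappa_{12}(x_2, \cdot)\) on \(R_1\) and \(x_2 \mapsto \kappa_{23}(x_2, \cdot)\) on \(R_3\) such that the law of \((X_1, X_2)\) on \(\Omega_{12}\) equals \(\kappa_{12}(x_2, dx_1)\,\nu(dx_2)\) and the law of \((X_2, X_3)\) on \(\Omega_{23}\) equals \(\kappa_{23}(x_2, dx_3)\,\nu(dx_2)\). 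Second, I would form the measure
\[
  \pi(dx_1, dx_2, dx_3)
  = \kappa_{12}(x_2, dx_1)\,\kappa_{23}(x_2, dx_3)\,\nu(dx_2)
\]
on the product \(\Omega_{123} = R_1 \times R_2 \times R_3\), and take \(X_1, X_2, X_3\) to be the coordinate projections. Measurability of the integrand in \(x_2\) guarantees that \(\pi\) is a well-defined probability measure.

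It then remains to check the marginals. Integrating out the \(x_3\)-coordinate, using that \(\kappa_{23}(x_2, \cdot)\) is a probability measure for each \(x_2\), recovers \(\kappa_{12}(x_2, dx_1)\,\nu(dx_2)\), which is precisely the law of \((X_1, X_2)\) on \(\Omega_{12}\); symmetrically, integrating out the \(x_1\)-coordinate recovers the law of \((X_2, X_3)\) on \(\Omega_{23}\). In particular each \(X_j\) retains its original distribution, and the two pairwise joint laws agree with the given couplings, which is the assertion.

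The main obstacle is the existence and measurability of the conditional kernels \(\kappa_{12}\) and \(\kappa_{23}\): this is exactly where the hypothesis on the value spaces enters. For standard Borel (in particular Polish) spaces the existence of regular conditional probabilities is classical, and universal measurability is the robust substitute that makes the disintegration, the kernel product above, and the integration against \(\nu\) go through even after the relevant \(\sigma\)-algebras are completed. Once the kernels are in hand, the gluing and the marginal computations are routine.
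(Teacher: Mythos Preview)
Your construction is the standard ``gluing lemma'' argument and is correct: disintegrate each coupling over the common marginal \(\nu\), take the conditional kernels, and form the conditionally independent product. The marginal checks are exactly as you describe.

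Compared with the paper: the paper gives no argument at all beyond reducing to the pushforward measures on the \(R_j\) and citing \cite[Lemma~7]{S84}. Your proposal is therefore strictly more informative, since it spells out the mechanism (regular conditional distributions plus conditional independence) that underlies the cited reference. One expositional wrinkle worth tightening: you first invoke existence of regular conditional probabilities by saying ``the value spaces are Polish,'' whereas the stated hypothesis is only universal measurability; you then correctly flag that universal measurability is what actually makes the disintegration and the kernel product go through. For the applications in this paper the spaces \emph{are} Polish, so no harm is done, but if you want to match the lemma as stated you should phrase the kernel existence step directly in terms of universal measurability (this is precisely the content of \cite{S84}) rather than passing through Polish as an intermediate assumption.
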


\begin{proof}
  Note that the probability spaces, on which the random variables 
  \(X_j\) are defined, are irrelevant, so we can instead work directly
  with the corresponding probability measures on \(R_j\).
  In this setting the result is proved in \cite[Lemma~7]{S84}.
\end{proof}

\begin{remark}
  It was pointed out by the referee that
  there is an earlier reference \cite[Lemma~A.1]{BP79} for the result
  of Lemma~\ref{lemma:joc} in case when \(R_j\) are separable Banach spaces.
  It is perfectly sufficient for our purposes (c.f.\ \cite[Subsection~3.1]{G10})
  and avoids the concept of universal measurability.
  
  We are happy to mention \cite{BP79}, yet we keep our Lemma~\ref{lemma:joc},
  because it is more general, and may be easier to use.
  For instance, it is not clear how to apply \cite[Lemma~A.1]{BP79}
  for the space of c\'adl\'ag functions with Skorokhod metric:
  it is separable and complete (thus Polish), but without a
  corresponding norm.
\end{remark}

\subsection*{Acknowledgements}
This research was supported in part by a European Advanced Grant {\em StochExtHomog} (ERC AdG 320977).
The author is grateful to Ian Melbourne for support 
and numerous suggestions.
The author is grateful to the anonymous referee for a very thorough review, many helpful comments and
a request to adapt the manuscript for a larger audience.

\end{document}